\newtheorem{theorem}{Theorem}
\newtheorem{lemma}{Lemma}
\newtheorem{proposition}{Proposition}
\theoremstyle{remark}
\newtheorem*{acknowledgements}{Acknowledgements}
\renewcommand{\Im}{\operatorname{Im}}
\renewcommand{\Re}{\operatorname{Re}}
\newcommand\Li{\operatorname{Li}}
\newcommand{\hpg}[5]{{}_{#1}\mbox{\rm F}_{\!#2}\!
  \left(\left.{#3 \atop #4}\right| #5 \right) }
\newcommand{\ff}{\operatorname{F}}
\begin{document}

\hypersetup{pdfauthor={Jesus Guillera, Mathew Rogers},%
pdftitle={Ramanujan series upside-down}}

\title{Ramanujan series upside-down}
\author{Jesus Guillera}
\address{Av.\ Ces\'areo Alierta, 31 esc.~izda 4$^\circ$--A, Zaragoza, SPAIN}
\email{jguillera@gmail.com}

\author{Mathew Rogers}
\address{Department of Mathematics and Statistics, Universit\'e de Montr\'eal,
CP 6128 succ.\ Centre-ville, Montr\'eal Qu\'ebec H3C\,3J7, Canada}
\email{mathewrogers@gmail.com}

\date{June 15, 2012.}

\subjclass[2010]{Primary 33C20; Secondary 11F11, 11F03, 11Y60, 33C75, 33E05} \keywords{Dirichlet $L$-values, formulas for
$\pi$, hypergeometric series, lattice sums, Ramanujan}

\begin{abstract} We prove that there is a correspondence between Ramanujan-type formulas for $1/\pi$, and formulas for Dirichlet $L$-values.  If we have an identity of the form
\begin{equation*}
\frac{1}{\pi}=\sum_{n=0}^{\infty}\frac{(s)_n (\frac{1}{2})_n (1-s)_n}{n!^3}(a+b n )z^n,
\end{equation*}
where $(s)_n=\Gamma(s+n)/\Gamma(s)$,
then under certain conditions we prove that
\begin{equation*}
\sum_{n=1}^{\infty}\frac{n!^3}{(s)_n (\frac{1}{2})_n (1-s)_n}\frac{(a-b n )}{n^3}z^{-n}
\end{equation*}
reduces to Dirichlet $L$-values evaluated at $2$.  The two sums rarely converge at the same time, however divergent formulas make sense when they are interpreted as values of analytically continued hypergeometric functions.  The same method also allows us to resolve certain values of the Epstein zeta function in terms of rapidly converging hypergeometric functions.  The Epstein zeta functions were previously studied by Glasser and Zucker in \cite{GZ}.
\end{abstract}

\maketitle

%==================================================
\section{Introduction}
\label{s-intro}
Quantities such as $\pi^2$ and the Dirichlet $L$-values are fundamental constants which appear in many areas of mathematics and physics.  It is interesting to relate them to hypergeometric functions, which  are important because of their applications in number theory.  For instance, Ap\'{e}ry proved the irrationality of $\zeta(3)$ using a $_4{\ff}_3$ identity \cite{Chudnovsky}.  Ramanujan discovered many famous hypergeometric formulas for $1/\pi$.  The following example \cite{Ra}:
\begin{equation}\label{Ramanujan formula 0}
\frac{1}{\pi}=\sum_{n=0}^{\infty}\frac{(-1)^n}{2^{6n}}{2n\choose n}^3 \left(\frac{1}{2}+2n\right),
\end{equation}
is connected to class number problems, and to the theory of complex multiplication \cite{Borwein}, \cite{Chudnovsky}.
In this paper we describe identities which we are closely related to Ramanujan's formulas.  Our first example can be constructed by manipulating \eqref{Ramanujan formula 0}.  Let $(1/2+2n)\mapsto(1/2-2n)$, flip the rest of the summand ``upside-down", insert a factor of $1/n^3$, and perform the summation for $n\ge1$.  Then we obtain a \textit{companion series identity}:
\begin{equation}\label{upside-down series 0}
8L_{-4}(2)=\sum_{n=1}^{\infty}\frac{(-1)^n 2^{6n}}{n^3{2n\choose n}^3}\left(\frac{1}{2}-2n\right).
\end{equation}
As usual $L_{-4}(2)=1-\frac{1}{3^2}+\frac{1}{5^2}\dots$ is Catalan's constant, $L_{k}(s):=\sum_{n=1}^{\infty}\frac{\chi_{k}(n)}{n^s}$ denotes the general Dirichlet $L$-series, and $\chi_k(n)=\left(\frac{k}{n}\right)$ is the Jacobi symbol.  Based on this example, we might expect that the same procedure should transform each of Ramanujan's formulas into identities involving Dirichlet $L$-values.  We prove that this guess is correct when certain technical conditions are added.  It is important to note that at least nine similar formulas already exist in the literature.  The individual formulas were discovered piecemeal with computational techniques, and proved by diverse methods.  We mention proofs due to Zeilberger \cite{ZB}, Guillera \cite{GuilleraThesis} \cite{G10Rama} \cite{G}, and the Hessami-Pilehroods \cite{HP}.  Sun also observed several identities from numerical experiments \cite{Sun}.  We give unified proofs of all of these results and conjectures in Theorem \ref{Theorem: proving Sun's formulas}.  We also show how to construct vast numbers of irrational formulas (such as \eqref{1/2 irrational pi^2 formula} and the examples in Table \ref{Table: Irrational evaluations}), which were previously unknown.  We describe our results in greater detail below.

Ramanujan identified
seventeen formulas for $1/\pi$ \cite{Ra}.  His identities all have the following form:
\begin{equation}\label{Ramanujan type series}
\frac{1}{\pi}=\sum_{n=0}^{\infty}\frac{(s)_n
\left(\frac{1}{2}\right)_n (1-s)_n}{(1)_n^3}(a+b n) z^n,
\end{equation}
where $(x)_n=\Gamma(x+n)/\Gamma(x)$.  Each example has
$s\in\{\frac{1}{2},\frac{1}{3},\frac{1}{4},\frac{1}{6}\}$,
with $(a, b, z)$ being parameterized by modular
functions \cite{Borwein},
\cite{Chudnovsky}.  When $s=\frac16$, $z=\frac{1}{j(\tau)}$, where $j(\tau)$ is the $j$-invariant, and the expressions for $a$ and $b$ involve Eisenstein series.  If we preserve the modular parameterizations for $(a, b, z)$, then
the general \textit{companion series} is given by
\begin{equation}\label{Comp series}
\sum_{n=1}^{\infty}\frac{(1)_n^3}{(s)_n(\frac{1}{2})_n(1-s)_n}\frac{\left(a-b
n\right)}{n^3}z^{-n}.
\end{equation}
When $n$ is large, standard asymptotics show that
{\allowdisplaybreaks $\frac{(s)_n\left(\frac{1}{2}\right)_n(1-s)_n}{(1)_n^3}\approx \frac{\sin(\pi s)}{(\pi n)^{3/2}}$}.
It follows that \eqref{Ramanujan type series} and \eqref{Comp series} can only converge simultaneously if $|z|=1$ (notice that \eqref{Ramanujan formula 0} and \eqref{upside-down series 0} occur when $s=\frac12$ and $(a,b,z)=\left(\frac{1}{2},2,-1\right)$).
Divergent cases still make sense, provided that each divergent infinite series is replaced by an analytically-continued hypergeometric function.  Once of the main goals of this work, is to transform divergent formulas for $1/\pi$, into interesting convergent formulas for Dirichlet $L$-values.

Suppose that $s\in\{\frac12,\frac13,\frac14\}$.  Then Propositions \ref{Proposition:Companion series reduction} and \ref{proposition: F in terms of S} reduce many values of the companion series \eqref{Comp series}, to linear combinations of two Epstein zeta function and elementary constants.  In general, once we fix the modular parameterizations for $(a,b,z)$ in \eqref{Comp series}, then Propositions \ref{Proposition:Companion series reduction} and \ref{proposition: F in terms of S} harshly restrict the domain of the modular functions (see the constraints on equations \eqref{final F(q) evaluation} and \eqref{final F(q) integer case}).  This means there are fewer \textit{potential} companion series evaluations, compared to the number of possible Ramanujan-type formulas coming from \eqref{Ramanujan type series}.  Finally, if the linear combination of Epstein zeta functions reduce to Dirichlet $L$-values, which is by no means automatic, then the companion series also reduces to Dirichlet $L$-values.  Proofs are based upon a new idea called \textit{completing the hypergeometric function}, which we outline in Section \ref{sec:completing the hypergeometric}.  The approach fails completely when $s=\frac16$, and we describe the rationale for this failure at the end of Section \ref{sec:completing the hypergeometric}.
The Epstein zeta functions which appear have been studied by Glasser and Zucker \cite{GZ}.  Following their notation, define
\begin{equation}\label{S-def}
S(A,B,C;t):=\sum_{(n,m)\ne (0,0)}\frac{1}{(A n^2+B n m+ C m^2)^t}.
\end{equation}
We demonstrate a calculation by proving \eqref{upside-down series 0}.  Set $q=-e^{-\pi\sqrt{2}}$ in \eqref{5F4 1/2 case in terms of F}. Then $(a,b,z)=(\frac{1}{2},2,-1)$.  By equation \eqref{final F(q) evaluation}, we have
\begin{equation*}
\sum_{n=1}^{\infty}\frac{(-1)^n}{n^3}\frac{(1)_n^3}{\left(\frac{1}{2}\right)_n^3}\left(\frac{1}{2}-2n\right)
=\frac{32\sqrt{2}}{\pi^2}\left(S(1,0,8;2)-S(3,4,4;2)\right).
\end{equation*}
Notice that $S(3,4,4;t)$ does not correspond to a reduced quadratic form ($C\ge A\ge|B|$),  but it is possible to show that $S(3,4,4;t)=S(3,2,3;t)$.
The key to completing the proof, is to reduce $S(A,B,C;t)$ to Dirichlet $L$-values.  It is fortunate that
this is a well-known problem.  Let us briefly recall that quadratic forms with fixed discriminant $D=B^2-4 A C$, are partitioned into equivalence classes under the action of $SL_2(\mathbb Z)$.  We say that quadratic forms of discriminant $D<0$ have \textit{one class per genus}, when disjoint classes of forms always represent disjoint sets of integers.   Glasser and Zucker conjectured that $S(A,B,C;t)$ reduces to Dirichlet $L$-values, if and only if $An^2+B n m+C m^2$ lives in a class of quadratic forms with one class per genus.  Despite the fact that Zucker and Robertson discovered a few strange counterexamples to this conjecture \cite{ZR}, most evidence suggests that the original conjecture is ``basically" correct.  Every interesting companion series boils down to two values of $S(A,B,C;2)$, and elementary constants.  The proof of \eqref{upside-down series 0} follows from showing
\begin{align*}
S(1,0,8;2)=&\frac{7\pi^2}{48} L_{-8}(2)+\frac{\pi
^2}{8\sqrt{2}}L_{-4}(2),\\
S(3,4,4;2)=&\frac{7\pi^2}{48} L_{-8}(2)-\frac{\pi
^2}{8\sqrt{2}}L_{-4}(2).
\end{align*}
This type of reasoning explains all of the previously known companion series formulas, and all of the results in Theorems \ref{Theorem: proving Sun's formulas} and \ref{Theorem: Divergent rational cases}.

There are many instances where it is probably impossible to express $S(A,B,C;t)$ in terms of Dirichlet $L$-values.  Then our method produces non-trivial hypergeometric formulas for $S(A,B,C;2)$.  For example, set $q=-e^{-\pi/3}$ in \eqref{5F4 1/2 case in terms of F}.  After some work we obtain
\begin{equation}\label{S(1,0,36) intro formula}
\frac{48}{\pi^2}S(1,0,36;2)=\frac{140}{27}L_{-4}(2)+\frac{13}{\sqrt{3}}L_{-3}(2)-\sum_{n=1}^{\infty}\frac{(1)_n^3}{(\frac12)_n^3}\frac{(a-b n)}{n^3}z^{-n},
\end{equation}
where
\begin{align*}
z=&-8\left(74977+40284r+21644 r^2+11629r^3\right),\\
a=&\frac{1}{18}\left(1038+558r+300r^2+161r^3\right),\\
b=&\frac{1}{3}\left(387+208r+112r^2+60r^3\right),
\end{align*}
and $r=\sqrt[4]{12}$.  Formula \eqref{S(1,0,36) intro formula} converges very rapidly because $z\approx -2.4\times 10^6$.  The infinite series can either be expressed as a ${_5{\ff}_4}$ function, or as a linear combination of two ${_4{\ff}_3}$'s.  In either case, this partially resolves a question of Zucker\footnote{Zucker's dream is to resolve $S(1,0,36;t)$ in terms of Dirichlet $L$-values with complex characters.} and McPhedran \cite{ZM}, who asked whether or not $S(1,0,36;t)$ reduces to known quantities.  See Section \ref{sec: irreducible values of S} for the proof of \eqref{S(1,0,36) intro formula}, and for additional examples.

\section{Review of Ramanujan's formulas}\label{sec review}
We begin with a brief, but in-depth review of
Ramanujan's formulas.  Suppose that \eqref{Ramanujan type series} holds for certain values
of $(a, b, z)$ and $s$.  Let $y_0(z)$ denote the following ${_3{\ff}_2}$ function:
\begin{equation}\label{y0 definition}
y_0(z)=\hpg32{s,\,\frac{1}{2},\,1-s}{1,1}{\,z}=\sum_{n=0}^{\infty}
\frac{(s)_n \left(\frac{1}{2} \right)_n (1-s)_n}{(1)_n^3} z^n.
\end{equation}
We parameterize $(a, b, z)$ in terms of $q$.
Suppose that $q$ and $z$ are related by the differential equation:
\begin{equation}\label{dq-dz}
\frac{d q}{d z}=\frac{q}{y_{0}(z) z\sqrt{1-z}}.
\end{equation}
It is possible to express $z$ in terms of $q$ by integrating and then inverting \eqref{dq-dz}. The inverse expressions are related to theta functions when $s\in\{\frac{1}{2},\frac13,\frac14,\frac16\}$ (we use \eqref{param s=1/2} when $s=\frac12$).  The formulas for $a$ and $b$ are given by:
\begin{align}\label{ecu-a-b}
a=\frac{1}{\pi y_{0}(z)} \left( 1 +  \frac{\ln|q|}{y_{0}(z)} q
\frac{dy_{0}(z)}{dq} \right),&&b=- \frac{\ln|q|}{\pi} \sqrt{1-z}.
\end{align}
The parameterizations can be verified by substituting them into \eqref{Ramanujan
type series}.  It is a deep fact that $(a, b, z)$ are \textit{algebraic}, whenever $q=e^{2\pi i(x_1+i\sqrt{|x_2|})}$ with $(x_1,x_2)\in\mathbb{Q}^2$, and $s\in\{\frac12,\frac13,\frac14,\frac16\}$. The algebraic numbers are usually complicated,
however rational evaluations occur in some instances.

\begin{proposition}\label{teor-y-rama}
Assume that $(a, b, z)$ and $q$ are related by \eqref{dq-dz}
and \eqref{ecu-a-b}. Suppose that $f(z)$ is a differentiable
function, and let
\begin{equation*}
\phi_f(q)=\frac{f(z)}{y_{0}(z)}.
\end{equation*}
Then
\begin{equation}\label{completa-zq} a f(z)+b z \frac{d
f(z)}{dz}=\frac{1}{\pi} \left(\phi_f(q) - \ln|q| \, q \frac{d
\phi_f(q)}{dq} \right).
%\sum_{n=0}^{\infty} z^{n+x}  \frac{ \left( \frac{1}{2}
%\right)_{n+x}\left( s \right)_{n+x} \left( 1-s \right)_{n+x}}{
%(1)_{n+x}^3} (a+b(n+x))=\frac{1}{\pi} \left( \phi(q,x) - \ln|q| \, q
%\frac{d \phi(q,x)}{dq} \right).
\end{equation}
\end{proposition}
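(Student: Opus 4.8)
The plan is to verify \eqref{completa-zq} by a direct computation in which every $q$-derivative is converted into a $z$-derivative through the defining relation \eqref{dq-dz}. First I would rearrange \eqref{dq-dz} to read $\frac{dz}{dq}=\frac{y_0(z)\,z\sqrt{1-z}}{q}$, so that the logarithmic derivative operator acts as
\begin{equation*}
q\frac{d}{dq}=y_0(z)\,z\sqrt{1-z}\,\frac{d}{dz}.
\end{equation*}
This operator identity is the whole engine of the proof: it lets me express both $q\,\frac{d\phi_f}{dq}$ (appearing on the right of \eqref{completa-zq}) and $q\,\frac{dy_0}{dq}$ (appearing in the formula \eqref{ecu-a-b} for $a$) as explicit functions of $z$, after which the two sides can be compared term by term.

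Next I would expand the right-hand side of \eqref{completa-zq}. Writing $\phi_f=f/y_0$ and applying the operator above together with the quotient rule gives
\begin{equation*}
q\frac{d\phi_f}{dq}=z\sqrt{1-z}\left(\frac{df}{dz}-\frac{f}{y_0}\frac{dy_0}{dz}\right),
\end{equation*}
so that
\begin{equation*}
\frac{1}{\pi}\left(\phi_f-\ln|q|\,q\frac{d\phi_f}{dq}\right)
=\frac{f}{\pi y_0}
-\frac{\ln|q|}{\pi}z\sqrt{1-z}\,\frac{df}{dz}
+\frac{\ln|q|}{\pi}z\sqrt{1-z}\,\frac{f}{y_0}\frac{dy_0}{dz}.
\end{equation*}

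Finally I would expand the left-hand side using \eqref{ecu-a-b}. Since $b=-\frac{\ln|q|}{\pi}\sqrt{1-z}$, the term $bz\,\frac{df}{dz}$ matches the middle term on the right immediately. For the $af$ term I would invoke $q\,\frac{dy_0}{dq}=y_0\,z\sqrt{1-z}\,\frac{dy_0}{dz}$, which collapses the factor $\frac{\ln|q|}{y_0}q\frac{dy_0}{dq}$ to $\ln|q|\,z\sqrt{1-z}\,\frac{dy_0}{dz}$; hence
\begin{equation*}
af=\frac{f}{\pi y_0}+\frac{\ln|q|}{\pi y_0}z\sqrt{1-z}\,f\,\frac{dy_0}{dz},
\end{equation*}
which reproduces exactly the remaining two terms on the right. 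Comparing the three matched terms on each side completes the argument. Because the proof is purely a bookkeeping exercise in the chain rule, there is no genuine obstacle; the only place demanding care is the consistent conversion of $q\,\frac{d}{dq}$ into $z\,\frac{d}{dz}$ and the quotient-rule differentiation of $\phi_f=f/y_0$, after which the identity closes term by term.
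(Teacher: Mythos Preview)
Your proof is correct and essentially identical to the paper's: both are direct chain-rule computations that expand the right-hand side of \eqref{completa-zq} via the quotient rule and the relation $q\,\frac{dz}{dq}=y_0(z)\,z\sqrt{1-z}$ from \eqref{dq-dz}, then identify the resulting coefficients of $f$ and $z\,\frac{df}{dz}$ with $a$ and $b$ from \eqref{ecu-a-b}. The only cosmetic difference is that you convert all $q$-derivatives to $z$-derivatives at the outset, whereas the paper retains $q\,\frac{dy_0}{dq}$ until the final step before invoking \eqref{ecu-a-b}.
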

\begin{proof}
From the right-hand side we have
\begin{align*}
\frac{1}{\pi} \left( \phi_f(q) - \ln|q| \, q \frac{d \phi_f(q)}{dq}
\right)&=\frac{1}{\pi}\left( \frac{f(z)}{y_0(z)} - \ln|q| \, q
\frac{d}{dq}\frac{f(z)}{y_0(z)} \right)\\
&=
\frac{1}{\pi} \frac{f(z)}{y_0(z)}- \ln|q| \frac{q}{\pi y_0^2(z)} \left( y_0(z)\frac{d f(z)}{dq}-f(z)\frac{d y_0(z)}{dq} \right) \notag\\
&= \frac{1}{\pi} \left( \frac{1}{y_0(z)}+\frac{\ln|q|}{y_0^2(z)} q
\frac{d y_0(z)}{dq} \right) f(z)\\
&\qquad- \left( \frac{\ln|q|}{\pi
y_0(z)}\frac{q }{z} \frac{dz}{dq} \right) z\frac{d f(z)}{dz}\\
&=a f(z)+b z \frac{d f(z)}{dz}. \nonumber
\end{align*}
The final step follows from \eqref{ecu-a-b}.
\end{proof}

Proposition \ref{teor-y-rama} allows us to insert a factor of
$(a+b n)$ into a power series.  For example, if $f(z)=y_{0}(z)$,
then $\phi_f(q)=1$.  We have
\begin{equation*}
1=\frac{1}{y_{0}(z)}\sum_{n=0}^{\infty}\frac{(s)_n(\frac{1}{2})_n(1-s)_n}{(1)_n^3}z^n.
\end{equation*}
By Proposition \ref{teor-y-rama} this becomes
\begin{equation*}
\frac{1}{\pi} \left(1 - \ln|q| \, q
\frac{d}{dq}\right)\cdot1=\left(a+b
z\frac{d}{dz}\right)\cdot\sum_{n=0}^{\infty}\frac{(s)_n(\frac{1}{2})_n(1-s)_n}{(1)_n^3}z^n,
\end{equation*}
hence
\begin{equation*}
\frac{1}{\pi}
=\sum_{n=0}^{\infty}\frac{(s)_n(\frac{1}{2})_n(1-s)_n}{(1)_n^3}(a+b
n)z^n.
\end{equation*}
More difficult cases require us to expand $f(z)/y_0(z)$ in a $q$-series, before applying Proposition \ref{teor-y-rama}.

\section{Completing the hypergeometric function}\label{sec:completing the hypergeometric}
In this section we introduce the idea of \textit{completing a
hypergeometric function}.  Hypergeometric functions are typically defined by an infinite series, and analytically continued
to a slit plane via integral formulas. To complete a
hypergeometric function, let $n\mapsto n+x$ in the series
definition, and extend the sum over $n\in\mathbb{Z}$.  Consider $y_{0}(z)$, defined in \eqref{y0 definition}, as an example. The completed version of $y_{0}(z)$ is a formal sum
\begin{equation}\label{Yx formal sum}
\sum_{n\in\mathbb{Z}}\frac{(s)_{n+x}\left(\frac{1}{2}\right)_{n+x}(1-s)_{n+x}}{(1)_{n+x}^3}z^{n+x},
\end{equation}
which involves powers of $z$ and $z^{-1}$.  To avoid divergence issues, consider the positive ($n\ge 0$) and negative ($n<0$) halves of the sum as hypergeometric functions.  This transforms \eqref{Yx formal sum} into a well-defined function:
\begin{equation}\label{Yx definition}
\begin{split}
Y_{x}(z):=&z^x\frac{\left(\frac{1}{2}\right)_x \left(
  1 - s\right)_x \left(s\right)_x}{(1)_x^3}
  %{_4{\ff}_3}\left(\substack{1, \frac{1}{2} + x, 1 - s + x,
  % s + x\\ 1 + x, 1 + x, 1 + x}\biggm| z\right)
  \hpg43{1,\,\frac12+x,\,1 - s + x,\,s + x}{1+x,1+x,1+x}{\,z}\\
   &- \frac{2x^3 z^{x-1}}{s(1-s)} \frac{\left( -\frac12
\right)_x(s-1)_x(-s)_x }{(1)_x^3}\hpg43{1,\,1-x,\,1 - x,\,1- x}{\frac{3}{2} - x, 2 - s - x,1 + s - x}{\,\frac{1}{z}}
%
%{_4{\ff}_3}\left(
%  \substack{1, 1 - x, 1 - x, 1 - x\\ \frac{3}{2} - x, 2 - s - x,
%   1 + s - x}\biggm| \frac{1}{z} \right),
\end{split}
\end{equation}
which is certainly analytic for $z\in\mathbb{C}\setminus\mathbb{R}$ (the ${_4{\ff}_3}$ functions and $z^x$ have branch cuts on the real axis).  From \eqref{Yx formal sum} it is obvious that $Y_{x}(z)$ is periodic in $x$:
\begin{equation*}
Y_{x}(z)=Y_{x+1}(z).
\end{equation*}
This property extends to \eqref{Yx definition}, because ${_4{\ff}_3}$ functions obey recurrences in their parameters, regardless of $z$.  Below we prove that $Y_x(z)$ equals a trigonometric polynomial in $x$.  This is the key result which enables us to sum up the companion series in Theorem \ref{main theorem}.

\begin{lemma}\label{Lemma: reduction of Yx(z)}  Suppose that $s\in(0,1)$ and $z\not\in\{0,1\}$.
There exist functions $u:=u(z)$ and $v:=v(z)$ which are independent
of $x$, such that
\begin{equation}\label{T1 reduction}
Y_x(z) = y_0(z) \, \frac{e^{i \pi x} \sin^2 \pi s}{\cos \pi x
(\cos^2 \pi x - \cos^2 \pi s)} \left(-u+(u+1)\cos 2 \pi x  - i v
\sin 2 \pi x \right).
\end{equation}
\end{lemma}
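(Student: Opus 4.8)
The plan is to fix $z$ in the upper half-plane and to study
\[
g(x):=\frac{Y_x(z)}{y_0(z)}
\]
as a function of the single variable $x$. By the periodicity noted above, $Y_{x+1}(z)=Y_x(z)$, and since $y_0(z)$ does not depend on $x$ we have $g(x+1)=g(x)$. Hence $g$ descends to a function of $w:=e^{2\pi i x}$, and the whole problem becomes: show that $g$ is a \emph{rational} function of $w$ of a prescribed shape, and then match that shape against the right-hand side of \eqref{T1 reduction}. Indeed, converting the target to the variable $w$ (using $\cos\pi x=\tfrac12(w^{1/2}+w^{-1/2})$, $\cos 2\pi x=\tfrac12(w+w^{-1})$, $\sin 2\pi x=\tfrac1{2i}(w-w^{-1})$ and $e^{i\pi x}=w^{1/2}$) collapses that right-hand side into
\[
y_0(z)\,\frac{4\sin^2\pi s\;w\,\bigl[(u+1-v)w^2-2uw+(u+1+v)\bigr]}{(w+1)\,(w-e^{2\pi i s})\,(w-e^{-2\pi i s})},
\]
a rational function with simple poles only at $w\in\{-1,e^{2\pi i s},e^{-2\pi i s}\}$, a simple zero at $w=0$, a finite value at $w=\infty$, and the normalization $g=1$ at $w=1$. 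So it suffices to prove that $g$ enjoys exactly these four features.

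Next I would pin down the poles. From \eqref{Yx definition} the only sources of singularities in $x$ are the Gamma-function prefactors; the first term carries
\[
\frac{(\tfrac12)_x(1-s)_x(s)_x}{(1)_x^3}
=\frac{\Gamma(\tfrac12+x)\,\Gamma(1-s+x)\,\Gamma(s+x)}{\Gamma(1+x)^3}\cdot\frac{1}{\Gamma(\tfrac12)\Gamma(1-s)\Gamma(s)},
\]
and the second term carries an analogous ratio. The hypergeometric series in \eqref{Yx definition} are analytic in $x$ where their denominator parameters are non-integral, so the numerator Gammas contribute at most simple poles at $x\equiv\tfrac12,\,s,\,1-s\pmod 1$, i.e.\ in $w$ at $-1,e^{2\pi i s},e^{-2\pi i s}$, while $1/\Gamma(1+x)^3$ kills the apparent poles at negative integers. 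Setting $x=0$ (where the second term vanishes because of its $x^3$ factor and the first collapses to $y_0$) gives $Y_0(z)=y_0(z)$, which shows there is no pole at $w=1$ and furnishes the normalization $g(1)=1$. Applying the reflection formula $\Gamma(a)\Gamma(1-a)=\pi/\sin\pi a$ to the residues turns these Gamma ratios into the trigonometric combination $\sin^2\pi s/[\cos\pi x(\cos^2\pi x-\cos^2\pi s)]$ that appears in \eqref{T1 reduction}.

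The crux is to rule out essential singularities at $w=0,\infty$ and to fix the asymmetric behaviour there. The clean way is to use that $Y_x(z)$ solves, for each fixed $x$, the third-order hypergeometric equation of $y_0$,
\[
\theta^3 Y_x=z\,(\theta+s)\bigl(\theta+\tfrac12\bigr)(\theta+1-s)\,Y_x,\qquad \theta:=z\frac{d}{dz},
\]
which follows by telescoping the coefficient recurrence of the completed series \eqref{Yx formal sum}; thus $Y_x=A(x)y_0+B(x)y_1+C(x)y_2$ in the Frobenius basis at $z=0$. The coefficients $A,B,C$ are read off from the $z\to0$ connection formula for the second ${}_{4}F_{3}$ in \eqref{Yx definition}, whose triple upper parameter $1-x$ produces precisely the $\log z$ and $\log^2 z$ of $y_1,y_2$; these connection coefficients are explicit ratios of Gamma functions in $x$, and Stirling's formula then shows that the exponentially growing parts of the two terms cancel, leaving $g(w)\to0$ as $w\to0$ and $g(w)$ bounded as $w\to\infty$. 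Consequently $g$ is meromorphic at $0$ and $\infty$, hence rational.

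Combining the features, $\hat g(w)=w\,Q(w)/[(w+1)(w-e^{2\pi i s})(w-e^{-2\pi i s})]$ with $\deg Q\le2$, and the normalization $\hat g(1)=1$ is exactly the single linear relation needed to write $Q$ in the form $4\sin^2\pi s\,[(u+1-v)w^2-2uw+(u+1+v)]$, which recovers \eqref{T1 reduction}; the two free parameters $u=u(z)$, $v=v(z)$ are the two residues of $g$ (equivalently, affine functions of $y_1(z)/y_0(z)$ and $y_2(z)/y_0(z)$), and so are independent of $x$, as required. The main obstacle is the asymptotic/connection analysis of the previous paragraph: one must treat the confluent, logarithmic case of the ${}_{4}F_{3}$ connection formula created by the triple parameter $1-x$, and verify that the leading exponentials really cancel. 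A secondary point is the degenerate values of $s$ (for instance $s=\tfrac12$, which occurs in \eqref{upside-down series 0}), where two of the three poles coalesce and the reduction must be read as the corresponding confluent limit.
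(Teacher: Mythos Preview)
Your proposal follows the same architecture as the paper's own proof: periodicity in $x$, the Picard--Fuchs decomposition $Y_x(z)=\sum_i m_i(x)\,y^{(i)}(z)$, identification of the simple poles at $x\equiv s,\tfrac12,1-s\pmod 1$ from the Gamma prefactors, the normalization $Y_0=y_0$, and a growth bound that forces the $x$-dependence to reduce to a short trigonometric polynomial. Your recasting in the variable $w=e^{2\pi i x}$ (rational function with prescribed simple poles, a zero at $0$, finite at $\infty$, degree count, normalization at $w=1$) is exactly the paper's terminating-Fourier-series argument, stated in equivalent language.

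The substantive divergence is precisely at the step you flag as the main obstacle. You propose to read off $A(x),B(x),C(x)$ from the $z\to0$ connection formula for the second ${}_4F_3$ in \eqref{Yx definition}---the one with triple upper parameter $1-x$---and then to apply Stirling to see cancellation of exponential growth in $w$. That connection problem is the confluent/logarithmic case, and you do not actually carry it out; in practice it is awkward. The paper sidesteps it entirely: it restricts to \emph{real} $z\in[\epsilon,1-\epsilon]$, observes that for large $|\Im(x)|$ each ${}_4F_3$ in \eqref{Yx definition} degenerates to a geometric ${}_1F_0$, and combines this with the elementary Stirling asymptotic of the Pochhammer prefactors (already $O(|\Im(x)|^{-3/2})$) to bound $Y_x(z)$ directly. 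A linear-independence argument (vary $z$ over the real segment) then transfers the bound to each $m_i(x)$ separately, and the Fourier series of $e^{-i\pi x}\cos\pi x\,(\cos^2\pi x-\cos^2\pi s)\,m_i(x)$ is forced to terminate at $\cos 2\pi x,\ \sin 2\pi x$. So no connection formula is needed: the growth control comes from choosing a convenient real $z$ where the hypergeometric series can be estimated term by term. Your plan is sound in outline, but the paper replaces your hardest step with an easy one.
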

\begin{proof}
Consider the
Picard-Fuchs operator which annihilates $y_0(z)$.  Let
\begin{equation}\label{picard-fu}
\begin{split}
P:=&\left( z\frac{d}{dz} \right)^3 - z \left(
z\frac{d}{dz}+\frac{1}{2} \right)\left( z\frac{d}{dz}+s
\right)\left( z\frac{d}{dz}+1-s \right).
%=&(1-z)\left( z\frac{d}{dz} \right)^3+\cdots,
\end{split}
\end{equation}
If convergence issues are ignored, then it is easy to show that $P$ also annihilates \eqref{Yx formal sum}.  This allows us to extrapolate
\begin{equation}\label{Yx annihilated}
P Y_{x}(z)=0.
\end{equation}
It is possible to prove \eqref{Yx annihilated} using standard rules for differentiating hypergeometric functions, but we leave this as an exercise.  Since $P$ annihilates $Y_x(z)$, the function has the form:
\begin{equation}\label{Yx decomp}
Y_{x}(z)=m_0(x)y^{(0)}(z)+m_1(x)y^{(1)}(z)+m_2(x)y^{(2)}(z),
\end{equation}
where each $y^{(i)}$ is a linearly independent solution of $Py=0$.  The linear independence property implies that $m_{i}(x)=m_{i}(x+1)$ for all $i$ (if the $m_{i}$'s are not periodic, then $Y_x(z)-Y_{x+1}(z)=0$ leads to a linear dependence between $y^{(i)}$'s).  We derive formulas for $m_i(x)$ below.

Suppose that $s\in(0,1)$, and that $z$ is not a singular point of $Y_x(z)$ (we exclude $z=0$ and $z=1$).  Since $Y_{x}(z)=Y_{x+1}(z)$, we assume without loss of generality that $\Re(x)\in[0,1)$.  We claim that $Y_x(z)$ is meromorphic in $x$, with simple poles at $x\in\{s,\frac12,1-s\}$.  To prove this, first recall that ${_4{\ff}_3}\left(a_1,a_2,a_3,a_4; b_1, b_2, b_3;z\right)$, is meromorphic with respect to each $b_i$, provided $z$ is not a singular point \cite{DL}.  Poles occur if $b_i\in\{0,-1,-2,\dots\}$.  Since $(\Re(x),s)\in[0,1)\times(0,1)$, it is easy to check that $\{1+x,\frac{3}{2}-x,2-s-x,1+s-x\}\bigcap\{0,-1,-2,\dots\}=\emptyset$, thus the ${_4{\ff}_3}$ functions in \eqref{Yx definition} do not contribute poles.  Next observe
\begin{align*}
\frac{\left( -\frac12\right)_x(s-1)_x(-s)_x}{(1)_x^3}=&\frac{\Gamma(-\frac12+x)\Gamma(s-1+x)\Gamma(-s+x)}{\Gamma\left(\frac12\right)\Gamma(-s)\Gamma(s-1)\Gamma^3(1+x)},\\
\frac{\left(\frac{1}{2}\right)_x \left(
1 - s\right)_x \left(s\right)_x}{(1)_x^3}=&\frac{\Gamma(\frac12+x)\Gamma(1-s+x)\Gamma(s+x)}{\Gamma(\frac12)\Gamma(1-s)\Gamma(s)\Gamma^3(1+x)}.
\end{align*}
The first ratio of Pochhammer symbols contributes simple poles when $x\in\{s,\frac{1}{2},1-s\}$, and the second ratio of Pochammer symbols is analytic for $(\Re(x),s)\in[0,1)\times(0,1)$.  By the linear independence argument above, we conclude that $m_{i}(x)$ is at worst meromorphic with simple poles when $x\in\{s,\frac12,1-s\}$.

Now we show that $m_i(x)=O(|\Im(x)|^{-3/2})$ when $|\Im(x)|$ is sufficiently large.  Let $z\in[\epsilon,1-\epsilon]$, for some small $\epsilon>0$. Note that $|z^x|=|z|^{\Re(x)}<1$.  Formula \eqref{Yx definition} becomes
\begin{align*}
|Y_{x}(z)|<&\left|\frac{\left(\frac{1}{2}\right)_x \left(
  1 - s\right)_x \left(s\right)_x}{(1)_x^3} \hpg43{1,\,\frac{1}{2}+x,\,1 - s + x,\,s + x}{1+x,1+x,1+x}{\,z}
 % {_4{\ff}_3}\left(\substack{1, \frac{1}{2} + x, 1 - s + x,
 %  s + x\\ 1 + x, 1 + x, 1 + x}\biggm| z\right)
   \right.\\
   &\qquad\left.- \frac{2x^3 z^{-1}}{s(1-s)} \frac{\left( -\frac12
\right)_x(s-1)_x(-s)_x }{(1)_x^3}
\hpg43{1,\,1-x,\,1 - x,\,1- x}{\frac{3}{2}-x,2-s-x,1+s-x}{\,\frac{1}{z}}
%{_4{\ff}_3}\left(
%  \substack{1, 1 - x, 1 - x, 1 - x\\ \frac{3}{2} - x, 2 - s - x,
%   1 + s - x}\biggm| \frac{1}{z} \right)
\right|.
\end{align*}
The right-hand side of the inequality vanishes when $|\Im(x)|\mapsto \infty$.  To see this, use the estimates
\begin{align*}
\hpg43{1,\,\frac{1}{2}+x,\,1 - s + x,\,s + x}{1+x,1+x,1+x}{\,z}&\approx\hpg10{1}{}{\,z}=\frac{1}{1-z}\\
 \hpg43{1,\,1-x,\,1 - x,\,1- x}{\frac{3}{2}-x,2-s-x,1+s-x}{\,\frac{1}{z}}&\approx\hpg10{1}{}{\,\frac{1}{z}}=\frac{z}{z-1},\\
 %{_1{\ff}_0}\left(\substack{1\\-}\biggm|\frac{1}{z}\right)=\frac{z}{z-1},\\
  \frac{(1-s)_x\left(\frac12\right)_x(s)_x}{(1)_x^3}&\approx\frac{\sin\pi s}{(\pi i \Im(x))^{3/2}},\\
  \frac{2x^3}{s(1-s)} \frac{\left( -\frac12
\right)_x(s-1)_x(-s)_x }{(1)_x^3}&\approx-\frac{\sin\pi s}{(\pi i \Im(x))^{3/2}},
\end{align*}
which are valid when $|\Im(x)|$ is large.  Thus if $|\Im(x)|$ is sufficiently large (which rules out the possibility of $x$ lying in a neighborhood of the points $\{s,\frac12,1-s\}$), then $Y_x(z)=O(|\Im(x)|^{-3/2})$.  The estimate holds uniformly for
$z\in[\epsilon,1-\epsilon]$, so a linear independence argument suffices to show that $m_i(x)=O(|\Im(x)|^{-3/2})$ for each $i$.

We have proved that $m_i(x)$ is periodic and meromorphic, with (possible) simple poles at $x\in\{s,\frac12,1-s\}$.  If $|\Im(x)|$ is sufficiently large, then $m_i(x)=O(|\Im(x)|^{-3/2})$.  We conclude that
\begin{equation*}
e^{-i \pi x} \cos \pi x
(\cos^2 \pi x - \cos^2 \pi s)m_i(x)
\end{equation*}
is analytic for $\Re(x)\in[0,1)$.  This new function has period $1$, so it is also analytic on $\mathbb{C}$.  The function is majorized by $O\left(|\Im(x)|^{-3/2}e^{4\pi |\Im(x)|}\right)$ for $|\Im(x)|$ sufficiently large.  Therefore the function has a Fourier series which terminates:
\begin{equation*}
e^{-i \pi x} \cos \pi x
(\cos^2 \pi x - \cos^2 \pi s)m_i(x)=a_i^{(0)}+a_i^{(1)}\cos(2\pi x)+a_i^{(2)} \sin(2\pi x).
\end{equation*}
After collecting constants in \eqref{Yx decomp}, and noting that $Y_{0}(z)=y_{0}(z)$, we conclude that $Y_x(z)$ has the form given in \eqref{T1 reduction}.
\end{proof}

Now let $y_x(z)$ denote
the positive half ($n\ge 0$) of the completed hypergeometric function:
\begin{equation}\label{y-x}
\begin{split}
y_x(z)&:=\sum_{n=0}^{\infty} \frac{ \left( \frac{1}{2} \right)_{n+x}
\left( s \right)_{n+x}\left( 1-s \right)_{n+x}}{ (1)_{n+x}^3
}z^{n+x}\\
&=z^x\frac{\left(\frac{1}{2}\right)_x \left(
  1 - s\right)_x \left(s\right)_x}{(1)_x^3}
  \hpg43{1,\,\frac{1}{2}+x,\,1 - s + x,\,s + x}{1+x,1+x,1+x}{\,z}.
%  {_4{\ff}_3}\left(\substack{1, \frac{1}{2} + x, 1 - s + x,
%   s + x\\ 1 + x, 1 + x, 1 + x}\biggm| z\right).
\end{split}
\end{equation}
The first author calls this an \textit{extended hypergeometric series} \cite{G10Rama}.
Since $y_x(z)$ is analytic in a neighborhood
of $x=0$, we have a Maclaurin series of the form
\begin{equation}\label{y-x over y-0 series}
\frac{y_{x}(z)}{y_0(z)}=1+\phi_1(q) x+\phi_2(q) x^2+\phi_{3}(q)x^3 +O(x^4),
\end{equation}
where $z$ and $q$ are related by \eqref{dq-dz}.  Since $y_x(z)/y_0(z)$ is non-holomorphic in $z$, we expect each $\phi_i(q)$ to
be non-holomorphic in $q$.
%Equation \eqref{y-x series} diverges if $|z|>1$, but $z^{-x} y_x(z)$
%analytically continues to $\mathbb{C}\setminus[1,\infty)$. When $z
%\in[1, \infty)$, we use $y_x(z)=\lim_{\delta\mapsto
%0}y_x(z-i\delta)$. Now let the summation run over negative integers:
%\begin{align}
%y_x^{-}(z)&:=\sum_{n<0} \frac{ \left( \frac{1}{2} \right)_{n+x}
%\left( s \right)_{n+x}\left( 1-s \right)_{n+x}}{ (1)_{n+x}^3
%}z^{n+x}\label{y-x-minus series}\\
%&=- \frac{2x^3 z^{x-1}}{s(1-s)} \frac{\left( -\frac12
%\right)_x(s-1)_x(-s)_x }{(1)_x^3} {_4{\ff}_3}\left(
%  \substack{1, 1 - x, 1 - x, 1 - x\\ \frac{3}{2} - x, 2 - s - x,
%   1 + s - x}; \frac{1}{z} \right).\label{y-x-minus}
%\end{align}

\begin{theorem}\label{main theorem}
Assume that $s\in(0,1)$, $z\not\in\{0,1\}$, and let $\phi_i(q)$ be as in \eqref{y-x over y-0 series}.  Then
\begin{equation}\label{pre-Sun in terms of phii's}
\begin{split}
\frac{1}{\pi y_{0}(z)}\sum_{n=1}^{\infty}&\frac{(1)_n^3}{(s)_n\left(\frac{1}{2}\right)_n(1-s)_n}\frac{z^{-n}
}{n^3}\\
&=\pi^2 i \csc^2\left(\pi s\right) -
 \frac{\pi}{3} \left(1 + 3 \csc^2(\pi s) \right)\phi_1(q)-
 i \phi_2(q)+ \frac{1}{\pi}\phi_3(q).
   \end{split}
\end{equation}
By Proposition \ref{teor-y-rama}, we also have
\begin{equation}\label{Sun in terms of phii's}
\begin{split}
\sum_{n=1}^{\infty}&\frac{(1)_n^3}{(s)_n\left(\frac{1}{2}\right)_n(1-s)_n}\frac{(a-b
n)}{n^3}z^{-n}\\
&=\pi^2 i \csc^2\left(\pi s\right) -
 \frac{\pi}{3} \left(1 + 3 \csc^2(\pi s) \right)\left(\phi_1(q) -
    q\log|q|\frac{d\phi_1(q)}{d q}\right) \\
    &\qquad-
 i \left(\phi_2(q) - q\log|q|\frac{d \phi_2(q)}{d q}\right) + \frac{1}{\pi}\left(\phi_3(q) -
   q\log|q|\frac{d\phi_3(q)}{d q}\right).
   \end{split}
\end{equation}
The sums in \eqref{pre-Sun in terms of phii's} and \eqref{Sun in terms
of phii's} diverge if $|z|<1$, however the identities remain valid
when ${_4{\ff}_3}$ and ${_5{\ff}_4}$ functions are substituted.
\end{theorem}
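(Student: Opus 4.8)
The plan is to extract two independent Maclaurin expansions of the completed function $Y_x(z)$ near $x=0$ and match them coefficient by coefficient. Write $Y_x(z)=y_x(z)+\tilde y_x(z)$, where $y_x(z)$ is the positive half from \eqref{y-x} and $\tilde y_x(z)$ is the negative half, i.e.\ the second ${}_4\ff_3$ term in \eqref{Yx definition}. The explicit factor $x^3$ in front of that term already shows $\tilde y_x(z)=O(x^3)$ as $x\to0$; evaluating the trailing ${}_4\ff_3$ and its Pochhammer prefactor at $x=0$ and simplifying with $(s)_{-m}=(-1)^m/(1-s)_m$ (and its analogues) together with $1/(1)_{-m+x}=(-1)^{m-1}(m-1)!\,x+O(x^2)$, I would prove
\begin{equation*}
\tilde y_x(z)=-x^3\sum_{n=1}^{\infty}\frac{(1)_n^3}{(s)_n(\tfrac12)_n(1-s)_n}\frac{z^{-n}}{n^3}+O(x^4).
\end{equation*}
Since $\tilde y_x(z)$ is manipulated here as a genuine ${}_4\ff_3$, this identifies the \emph{analytically continued} companion series, not merely the formal sum, which is what makes the last sentence of the theorem work.

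It follows that the low-order coefficients of $Y_x(z)$ are controlled by $y_x(z)$ alone. Writing $[x^k]g$ for the coefficient of $x^k$ in the Maclaurin expansion of $g$, the expansion \eqref{y-x over y-0 series} gives $[x^0]Y_x=y_0$, $[x^1]Y_x=y_0\phi_1$, $[x^2]Y_x=y_0\phi_2$, and
\begin{equation*}
[x^3]Y_x=y_0\phi_3-\sum_{n=1}^{\infty}\frac{(1)_n^3}{(s)_n(\tfrac12)_n(1-s)_n}\frac{z^{-n}}{n^3}.
\end{equation*}
On the other hand, Lemma \ref{Lemma: reduction of Yx(z)} gives $Y_x(z)=y_0(z)\,T(x)$, where $T(x)$ is the trigonometric factor on the right of \eqref{T1 reduction}, so $[x^k]Y_x=y_0\,[x^k]T(x)$. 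Comparing expansions, the order $x^0$ is automatic because $T(0)=1$ for all $u,v$, while orders $x^1$ and $x^2$ force $[x^1]T(x)=\phi_1$ and $[x^2]T(x)=\phi_2$.

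The two conditions $[x^1]T(x)=\phi_1$ and $[x^2]T(x)=\phi_2$ are linear in the $x$-independent quantities $u,v$, so they pin these down: a computation gives $v=\tfrac12+\tfrac{i\phi_1}{2\pi}$ and $2u=-1+\csc^2(\pi s)+\tfrac{i\phi_1}{\pi}-\tfrac{\phi_2}{\pi^2}$. The central step is then to expand $T(x)$ through order $x^3$ — using $\cos^2\pi x-\cos^2\pi s=\sin^2(\pi s)-\pi^2x^2+O(x^4)$ in the denominator — read off $[x^3]T(x)$ as a function of $u,v,s$, and substitute the solved values. I expect the algebra to collapse cleanly to
\begin{equation*}
[x^3]T(x)=-i\pi^3\csc^2(\pi s)+\frac{\pi^2}{3}\bigl(1+3\csc^2(\pi s)\bigr)\phi_1+i\pi\phi_2.
\end{equation*}
This order-$3$ bookkeeping, together with the cancellation of every $\phi$-free and $\csc^2$ term except the ones displayed, is the step most likely to hide sign or constant errors and is really the crux of the theorem. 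A secondary point requiring care is the justification, glossed in the Lemma, that the formal operator identity $PY_x=0$ legitimately propagates to the continued ${}_4\ff_3$'s; here it is harmless because $\tilde y_x$ is handled as an honest hypergeometric function throughout.

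Equating the two values of $[x^3]Y_x$ yields
\begin{equation*}
\sum_{n=1}^{\infty}\frac{(1)_n^3}{(s)_n(\tfrac12)_n(1-s)_n}\frac{z^{-n}}{n^3}=y_0(z)\bigl(\phi_3-[x^3]T(x)\bigr),
\end{equation*}
and dividing by $\pi y_0(z)$, after inserting the value of $[x^3]T(x)$ above, produces \eqref{pre-Sun in terms of phii's}. Finally, \eqref{Sun in terms of phii's} follows by applying Proposition \ref{teor-y-rama} with $f(z)=\tfrac1\pi\sum_{n\ge1}\frac{(1)_n^3}{(s)_n(\frac12)_n(1-s)_n}\frac{z^{-n}}{n^3}$: the operator $a+bz\frac{d}{dz}$ sends $z^{-n}\mapsto(a-bn)z^{-n}$, so the left-hand side becomes the $(a-bn)$-weighted sum, while on the right $\phi_f$ equals the right-hand side of \eqref{pre-Sun in terms of phii's} and is mapped to $\phi_f-q\log|q|\,d\phi_f/dq$; the constant $\pi^2 i\csc^2(\pi s)$ is annihilated by $q\,d/dq$, leaving exactly \eqref{Sun in terms of phii's}. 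Because $\tilde y_x(z)$ and $T(x)$ were treated as ${}_4\ff_3$ and (after Proposition \ref{teor-y-rama}) ${}_5\ff_4$ functions rather than as series, the resulting identities are valid for those continuations even when $|z|<1$ and the defining sums diverge.
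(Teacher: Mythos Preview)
Your proposal is correct and follows essentially the same approach as the paper: expand $Y_x(z)/y_0(z)$ from both the decomposition $Y_x=y_x+\tilde y_x$ and the trigonometric representation of Lemma~\ref{Lemma: reduction of Yx(z)}, match the $x^1$ and $x^2$ coefficients to determine $u,v$, and then read off the companion series from the $x^3$ coefficient. Your explicit values of $u,v$ and of $[x^3]T(x)$ agree with the paper's computation, and your derivation of \eqref{Sun in terms of phii's} via Proposition~\ref{teor-y-rama} is exactly what the paper intends (though it states it more tersely).
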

\begin{proof} From \eqref{Yx definition} and \eqref{y-x} we see that
\begin{equation*}
Y_{x}(z)=y_{x}(z)+O(x^3).
\end{equation*}
This is sufficient to determine $u$ and $v$ in \eqref{T1 reduction}.  From \eqref{y-x} we find
\begin{equation*}
\frac{y_{x}(z)}{y_{0}(z)}=1+\phi_1(q)x+\phi_2(q)x^2+\phi_3(q)x^3+O(x^4).
\end{equation*}
By \eqref{T1 reduction} we also have
\begin{equation}\label{Yx taylor series}
\begin{split}
\frac{Y_{x}(z)}{y_{0}(z)}=&1+i \pi  (1-2 v) x+\pi^2 \left(-2-2 u+2 v+\csc^2(\pi  s)\right) x^2\\
&-\frac{i\pi^3}{3} \left(5+6 u-4 v+(-3+6 v) \csc^2(\pi s)\right)
x^3+O(x^4),
\end{split}
\end{equation}
where $s$ and $z$ satisfy the appropriate restrictions.
The Taylor coefficients of $Y_{x}(z)$ and $y_x(z)$ agree up to order $x^2$.
This leads to a pair of equations
\begin{align*}
\phi_{1}(q)&=i \pi  (1-2 v)\\
\phi_{2}(q)&=\pi^2 \left(-2-2 u+2 v+\csc^2(\pi  s)\right),
\end{align*}
from which it is easy to solve for $u$ and $v$.

The companion series arises from the $x^3$ coefficient of $Y_x(z)$.  By \eqref{Yx definition} and \eqref{y-x} we have
\begin{equation*}
\begin{split}
\frac{1}{y_0(z)}\sum_{n=1}^{\infty}\frac{(1)_n^3}{(s)_n\left(\frac{1}{2}\right)_n(1-s)_n}\frac{z^{-n}}{n^3}
   =&\frac{1}{y_0(z)}\frac{2z^{-1}}{s(1-s)}\hpg43{1,\,1,\,1,\,1}{\frac{3}{2},2-s,1+s}{\,\frac{1}{z}}
   %{_4{\ff}_3}\left(\substack{1,1,1,1\\\frac32,2-s,1+s}\biggm|\frac{1}{z}\right)
   \\
   =&\lim_{x\rightarrow 0}\left(\frac{y_{x}(z)-Y_{x}(z)}{y_0(z)~x^3}\right)\\
   =&\phi_3(q)+\frac{i\pi^3}{3}\left(5+6 u-4 v+(-3+6 v) \csc^2(\pi s)\right).
\end{split}
\end{equation*}
We recover \eqref{pre-Sun in terms of phii's} by eliminating $u$ and $v$.
\end{proof}
Despite the fact that \eqref{pre-Sun in
terms of phii's} and \eqref{Sun in terms of phii's} hold for many
values of $s$, we have only been able to evaluate $\phi_i(q)$
if $s\in\{\frac{1}{2},\frac{1}{3},\frac{1}{4}\}$. We prove formulas
for $\phi_i(q)$ below.
\begin{theorem}\label{thm extended coeff} Suppose that $q$ lies in a neighborhood of zero. When
$s=\frac12$:
\begin{align}
\phi_1(q)=&\ln q, \\
\phi_2(q)=&\frac{1}{2}\ln^2 q +
\frac{\pi^2}{2}, \\
\phi_3(q)=&\frac{1}{6} \ln^3 q +\frac{\pi^2}{2}\ln
q-6\zeta(3)-16\sum_{n=1}^{\infty} \frac{\sigma_3(n)}{n^3} q^n+4
\sum_{n=1}^{\infty} \frac{\sigma_3(n)}{n^3} q^{4n}.
\end{align}
When $s=\frac13$:
\begin{align}
\phi_1(q)=&\ln q, \\
\phi_2(q)=&\frac{1}{2}\ln^2 q +
\frac{2\pi^2}{3}, \\
\phi_3(q)=&\frac{1}{6} \ln^3 q +\frac{2\pi^2}{3}\ln
q-10\zeta(3)-30\sum_{n=1}^{\infty} \frac{\sigma_3(n)}{n^3} q^n+10
\sum_{n=1}^{\infty} \frac{\sigma_3(n)}{n^3} q^{3n}.
\end{align}
When $s=\frac14$:
\begin{align}
\phi_1(q)=&\ln q, \\
\phi_2(q)=&\frac{1}{2}\ln^2 q +
\pi^2, \\
\phi_3(q)=&\frac{1}{6} \ln^3 q +\pi^2\ln
q-20\zeta(3)-80\sum_{n=1}^{\infty} \frac{\sigma_3(n)}{n^3} q^n+40
\sum_{n=1}^{\infty} \frac{\sigma_3(n)}{n^3} q^{2n}.
\end{align}
\end{theorem}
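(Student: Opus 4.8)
The strategy is to read off $\phi_1,\phi_2,\phi_3$ from the Frobenius structure of the extended series, and to reduce the only non-elementary content to a single weight-$4$ modular identity. Write $y_x(z)=z^x\,C(x)\,S(x)$ where
\[
C(x)=\frac{\left(\tfrac12\right)_x(s)_x(1-s)_x}{(1)_x^3},\qquad
S(x)=\hpg43{1,\,\tfrac12+x,\,s+x,\,1-s+x}{1+x,1+x,1+x}{\,z},
\]
and set $w(z,x)=z^x S(x)$, so that $y_x=C(x)\,w$ and $y_0=w(z,0)$. The Pochhammer recurrence shows that $w$ satisfies the \emph{inhomogeneous} Picard--Fuchs equation $P\,w=x^3z^x$, with $P$ as in \eqref{picard-fu}. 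Writing $w_k=\partial_x^k w|_{x=0}$ and differentiating in $x$ gives the key dichotomy $P w_0=Pw_1=Pw_2=0$ while $Pw_3=6$: the first three Frobenius coefficients solve $Py=0$, but $w_3$ is a particular solution of the inhomogeneous equation. Expanding $\log(y_x/y_0)=\log C(x)+\log(w/w_0)$ and exponentiating expresses $\phi_1,\phi_2,\phi_3$ through the Taylor coefficients $g_k=\tfrac1{k!}\big[\psi^{(k-1)}(\tfrac12)+\psi^{(k-1)}(s)+\psi^{(k-1)}(1-s)-3\psi^{(k-1)}(1)\big]$ of $\log C(x)$ and the ratios $F_k:=w_k/y_0$.

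\textbf{The first two coefficients.} Here I would invoke Clausen's formula $y_0=f_0^2$ with $f_0=\hpg21{s/2,\,(1-s)/2}{1}{z}$, so that $P$ is the symmetric square of the second-order operator $L_2$ annihilating $f_0$. Consequently every solution of $Py=0$, divided by $y_0$, is a polynomial of degree $\le2$ in $\tau=f_1/f_0$; and since the Wronskian of $L_2$ is proportional to $1/(z\sqrt{1-z})$, equation \eqref{dq-dz} is exactly $d\log q=2\pi i\,d\tau$. Thus $F_1$ is linear in $\log q$ with unit slope, and the normalization of $q$ forces $\phi_1=g_1+F_1=\log q$. Because $F_1,F_2$ are polynomials in $\log q$ with \emph{no} $q$-series, the combination $\phi_2-\tfrac12\phi_1^2=g_2+\tfrac12F_2-\tfrac12F_1^2$ is constant; evaluating it as $z\to0$ (where $F_k\to\log^k z$) collapses it to $g_2$, and $g_2=\tfrac12[\psi'(\tfrac12)+\psi'(s)+\psi'(1-s)-3\psi'(1)]$ equals $\tfrac{\pi^2}2,\tfrac{2\pi^2}3,\pi^2$ for $s=\tfrac12,\tfrac13,\tfrac14$ by the reflection formula $\psi'(v)+\psi'(1-v)=\pi^2/\sin^2\pi v$. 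This also yields the exact identity $F_2=F_1^2$, which is what makes the cubic case tractable. Put $c_s:=g_2$.

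\textbf{The cubic coefficient.} Using $F_2=F_1^2$ and exponentiating gives $\phi_3=g_3+\tfrac16F_3-\tfrac16F_1^3+c_s\log q+\tfrac16\log^3 q$ with $F_1=\log q-g_1$. First, the constant $g_3=\tfrac16[\psi''(\tfrac12)+\psi''(s)+\psi''(1-s)-3\psi''(1)]$ evaluates, via $\psi''(1)=-2\zeta(3)$ and the multiplication formula $\sum_{k=0}^{m-1}\psi''(\tfrac{a+k}{m})=m^3\psi''(a)$, to exactly $-6\zeta(3),-10\zeta(3),-20\zeta(3)$ --- these are the $\zeta(3)$-constants in the statement, and they come entirely from the Pochhammer prefactor $C(x)$, not from any integration constant. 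Next, transporting $Pw_3=6$ to the $q$-variable: the symmetric-square conjugation turns $P$ into $\mu(z)(2\pi i)^3 y_0\,(q\tfrac{d}{dq})^3 y_0^{-1}$ with $\mu(z)=w_*^3/(\sqrt{1-z}\,y_0^3)$, and matching the Wronskian normalization against \eqref{dq-dz} gives $(2\pi i)^3w_*^3=1$. Hence $(q\tfrac{d}{dq})^3F_3=6\sqrt{1-z}\,y_0^2$, and after the third derivatives of the polynomial parts cancel one obtains the single clean equation
\[
\left(q\frac{d}{dq}\right)^3\phi_3=\sqrt{1-z}\;y_0(z)^2 .
\]
Analysing $F_3-F_1^3$ as $z\to0$ shows it carries no powers of $\log q$ and vanishes at $q=0$, so $\phi_3-\tfrac16\log^3q-c_s\log q-g_3$ is a pure $q$-series and all integration constants are pinned.

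\textbf{The main obstacle.} The only genuinely hard input is to identify $\sqrt{1-z}\,y_0^2$, read as a function of $q$ through the theta parameterization of \eqref{param s=1/2} and its $s=\tfrac13,\tfrac14$ analogues, with an explicit weight-$4$ Eisenstein combination: for $s=\tfrac12$ I expect $\sqrt{1-z}\,y_0^2=\tfrac1{15}\big(16E_4(q^4)-E_4(q)\big)=1-16\sum_{n\ge1}\sigma_3(n)q^n+256\sum_{n\ge1}\sigma_3(n)q^{4n}$, with the level-$3$ and level-$2$ forms for $s=\tfrac13,\tfrac14$ (moduli $m=4,3,2$). Granting this, $(q\tfrac{d}{dq})^3\phi_3=1-e_s\sum\sigma_3(n)q^n+f_s m^3\sum\sigma_3(n)q^{mn}$, and integrating three times with $q\tfrac{d}{dq}$ --- which sends $q^{mn}$ to $q^{mn}/(mn)^3$ and so turns $\sum\sigma_3(n)q^{mn}$ into $\sum\sigma_3(n)q^{mn}/(mn)^3$ --- reproduces the stated $\phi_3$, the constant being the already-computed $g_3$ and the $\log q$-coefficient being $c_s$. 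I expect the weight-$4$ identity to be the crux; it can be verified by writing $z,\sqrt{1-z},y_0$ as modular forms in $q$ and matching finitely many Fourier coefficients in the finite-dimensional space of weight-$4$ forms on the relevant $\Gamma_0(N)$.
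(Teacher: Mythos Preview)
Your argument is essentially the paper's argument, repackaged. Both routes apply the Picard--Fuchs operator to the extended series to obtain $(q\,d/dq)^3\phi_1=(q\,d/dq)^3\phi_2=0$ and $(q\,d/dq)^3\phi_3=\sqrt{1-z}\,y_0^2$; the paper cites Yang's lemma for the conjugation $P\mapsto (q\,d/dq)^3$, while you derive it from the symmetric-square/Clausen structure, which is precisely what underlies that lemma. Both then identify $\sqrt{1-z}\,y_0^2$ with the relevant level-$m$ weight-$4$ Eisenstein combination (the paper does this via $\theta_3^8-2\theta_2^4\theta_3^4$ and Berndt's entry for $s=\tfrac12$, and says the other signatures are analogous) and integrate.

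The one genuine difference is how the integration constants are fixed. The paper carries nine undetermined constants $\alpha_i,\beta_i,\gamma_i$ and determines them all at once by matching the $q\to0$ asymptotics of $q^{-x}y_x(z)$ against $64^x(1/2)_x^3/(1)_x^3$; this limit is, implicitly, exactly your computation of the Taylor coefficients of $C(x)$. You instead split off $C(x)$ from the outset, so that the constants appear explicitly as the polygamma values $g_2=\tfrac{\pi^2}{2\sin^2\pi s}$ and $g_3$ (which you evaluate by the multiplication formula to get the $-6\zeta(3),-10\zeta(3),-20\zeta(3)$), and the auxiliary identity $F_2=F_1^2$ falls out cleanly from the symmetric-square basis plus the vanishing at $z\to0$. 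Your bookkeeping is tidier and makes the origin of the $\zeta(3)$ terms transparent, but the mathematical content is the same.
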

\begin{proof}  The essential idea is to apply the
Picard-Fuchs operator which annihilates $y_0(z)$.  Recall that $P$ is defined in \eqref{picard-fu}.  It was proved in
\cite[Prop. 2.2]{guilleraMat}, that
\begin{equation}\label{Pw}
P y_x(z) = \frac{(1-s)_x\left(\frac{1}{2}\right)_x(s)_x}{(1)_x^3}z^x
x^3=x^3+O(x^4).
\end{equation}
When $x=0$, we immediately obtain the homogeneous differential
equation $P y_0(z)=0$.  If $y_x(z)$ is expanded in a Maclaurin
series with respect to $x$, then by \eqref{y-x over y-0 series} we
have $P (y_0(z) \phi_1(q))=0$ and $P (y_0(z) \phi_2(q))=0$.
Appealing to \cite[Lemma 1]{yang}, we see that
\begin{equation}\label{yifan12}
\left( q \frac{d}{dq} \right)^3 \phi_1(q)=0, \qquad \left( q
\frac{d}{dq} \right)^3 \phi_2(q)=0,
\end{equation}
and integrating gives
\begin{align}
\phi_1(q)=&\alpha_0+\alpha_1 \ln q+\alpha_2 \ln^2 q,\label{phi1 undet}\\
\phi_2(q)=&\beta_0+\beta_1 \ln q+\beta_2 \ln^2 q,\label{phi2
undet}
\end{align}
where the $\alpha_{i}$'s and $\beta_{i}$'s are undetermined
constants. Examining the $x^3$ coefficient of $y_x(z)$, leads to the
inhomogeneous differential equation $P [ y_0(z) \phi_3(q) ]=1$. By
\cite[Lemma 1]{yang} and \cite[Iden. 2.33]{guilleraMat}, we find
that
\begin{equation}\label{yifan3}
\left( q \frac{d}{dq} \right)^3 \phi_3(q)= \sqrt{1-z}~y_0^2(z).
\end{equation}
In order to solve \eqref{yifan3}, and to determine the constants
in \eqref{phi1 undet} and\eqref{phi2 undet}, it is necessary to
specify the value of $s$.

Suppose that $q$ lies in a neighborhood of zero.  When $s=\frac12$
we have $\sqrt{1-z}=1-2\lambda(q)$, where
$\lambda(q)=\theta_2^4(q)/\theta_3^4(q)$ is the elliptic lambda
function \cite[Sect. 2.5]{guilleraMat}.  By standard theta function
inversion formulas, we also have
\begin{equation}\label{y0 in terms of theta3}
 y_0(z)=\theta_3^4(q).
\end{equation}
Identity \eqref{y0 in terms of theta3} \textit{does not} hold for
$|q|<1$.  For instance, if $q$ is close to $1$ we have to replace
\eqref{y0 in terms of theta3} with
$y_0(z)=\frac{\log^2(q)}{\pi^2}\theta_3^4(q)$.  For $|q|$
sufficiently small
\begin{equation*}
\begin{split}
y_0^2(z)\sqrt{1-z}&=\theta_3^8(q)-2\theta_3^4(q)\theta_2^4(q)\\
&=1-16\sum_{n=1}^{\infty} \sigma_3(n) q^n+16^2 \sum_{n=1}^{\infty}
\sigma_3(n) q^{4n},
\end{split}
\end{equation*}
where the second equality follows from \cite[~pg. 126, Entry
13]{Be3}. Integrating \eqref{yifan3} gives
\begin{equation}\label{s12phi3-gammas}
\begin{split}
\phi_3(q)=&\gamma_0 + \gamma_1 \ln q + \gamma_2 \ln^2 q +
\frac{1}{6}
\ln^3 q\\
& - 16 \sum_{n=1}^{\infty} \sigma_3(n)
\frac{q^n}{n^3}+4\sum_{n=1}^{\infty} \sigma_3(n)
\frac{q^{4n}}{n^3},
\end{split}
\end{equation}
where the $\gamma_{i}$'s are constants.

There are nine constants left to calculate. Let $q$ tend to zero in
\eqref{y-x over y-0 series}. Since $z$ has a $q$-series of the form
$z=64q +O(q^2)$, it follows that $z\approx64q$ when $q$ approaches
zero.  In a similar manner we find that $y_0(z)\approx 1$.  By
\eqref{y-x over y-0 series} we have
\begin{align}
q^{-x} y_x(z)&=q^{-x} y_0(z)
\left(1+\phi_1(q)x+\phi_2(q)x^2+\phi_3(q)x^3+O(x^4)\right)\notag\\
&\approx
q^{-x}\left(1+\phi_1(q)x+\phi_2(q)x^2+\phi_3(q)x^3+O(x^4)\right)\label{constantcalc1}.
\end{align}
From the definition of $q^{-x} y_x(z)$, we calculate
\begin{align}
q^{-x}y_x(z)=&q^{-x}
z^{x}\frac{\left(\frac{1}{2}\right)_x^3}{(1)_x^3}\left(1+\sum_{n=1}^{\infty}
z^n \frac{ \left( \frac{1}{2} + x \right)_{n}^3}{(1+x)_{n}^3}\right)\notag\\
\approx&
64^{x}\frac{\left(\frac{1}{2}\right)_x^3}{(1)_x^3}(1+0)\label{constantcalc2}
\end{align}
Compare the Maclaurin series coefficients of \eqref{constantcalc1}
and \eqref{constantcalc2} in $x$, $x^2$, and $x^3$. Since
\eqref{constantcalc2} is holomorphic at $x=0$, it follows that
\eqref{constantcalc1} is holomorphic at $x=0$ as well.  Since $q$
tends to zero, this implies that the powers of $\log(q)$ must drop
out of the series obtained from \eqref{constantcalc1}. Comparing
coefficients then provides sufficiently many relations to determine
the values of $\alpha_{i}$, $\beta_{i}$, and $\gamma_{i}$
explicitly.  The cases when $s=\frac13$ and $s=\frac14$ require
analogous arguments, using appropriate theta functions from
\cite{Be5}.
\end{proof}

The method fails when $s=\frac16$, because of our inability to solve \eqref{yifan3}.  The calculation is difficult because Ramanujan's theory of signature-$6$ modular equations is incomplete, and as a result it seems to be impossible to find a nice $q$-series expansion for $\sqrt{1-z}~y_0^2(z)$.  Notice that \eqref{yifan3} is equivalent to
\begin{equation}\label{s=1/6 failure}
\left(q\frac{d}{d q}\right)^3\phi_3(q)=\frac{1-504\sum_{n=1}^{\infty}\frac{n^5q^n}{1-q^n}}{\sqrt{1+240\sum_{n=1}^{\infty}\frac{n^3 q^n}{1-q^n}}}.
\end{equation}
If we could obtain a reasonable expression for $\phi_3(q)$, then it might be possible to evaluate a companion series with $s=\frac16$.  Experimental searches failed to turn up any interesting identities, so we suspect that the task is impossible.

\section{Explicit Formulas}\label{sec: explicit formulas}
Now we prove companion series evaluations.  Proposition
\ref{Proposition:Companion series reduction} reduces every companion
series to elementary constants and values of the following special
function:
\begin{equation}\label{F(q) definition}
\begin{split}
F(q):=&-\frac{\log^3|q|}{3\pi}+\frac{120}{\pi}\zeta(3)+\frac{240}{\pi}\sum_{j=1}^{\infty}\Li_{3}(q^j)-\log|q^j|\Li_{2}(q^j).
\end{split}
\end{equation}
Notice that $F(q)$ is closely related to the elliptic trilogarithm \cite{ZG}.
Set $q=e^{2\pi i \tau}$, with
$\tau=x+i y$, and $y>0$. In Proposition \ref{proposition: F in terms of S} we
prove
\begin{equation}\label{Re(F(q)) evaluation}
\Re\left(F(q)\right)=\frac{120y^3}{\pi^2}S\left(1,2x,x^2+y^2;2\right).
\end{equation}
It is easy to see that $F(q)$ is real-valued if $q\in(-1,1)$, so \eqref{Re(F(q)) evaluation} becomes a formula for $F(q)$ whenever $x\in\mathbb{Z}/2$.  Glasser and Zucker proved that $S(A,B,C;t)$ reduces to Dirichlet
$L$-values quite often. This leads to $65$ evaluations of $F(q)$, when $x=0$ and
$y^2\in\mathbb{N}$. For instance, when $(x,y)=(0,\sqrt{7})$, we have
\begin{equation*}
F\left(e^{-2\pi\sqrt{7}}\right)=175\sqrt{7}L_{-7}(2).
\end{equation*}
Various additional values of $F(q)$ are provided in Table \ref{Table:F(q) formulas}.
The formulas in Theorems \ref{Theorem: proving Sun's formulas} and
\ref{Theorem: Divergent rational cases} are proved by evaluating linear combinations of $F(q)$'s.

\begin{proposition}\label{Proposition:Companion series reduction} Suppose that $q$ lies in a neighborhood of zero.  When $s=\frac12$:
\begin{equation}\label{5F4 1/2 case in terms of F}
\begin{split}
\sum_{n=1}^{\infty}\frac{(1)_n^3}{\left(\frac{1}{2}\right)_n^3}\frac{(a-b
n)}{n^3}z^{-n}
=&-\frac{1}{15}F(q)+\frac{1}{60}F(q^4)\\
&+\frac{\log(q)^3}{6 \pi }-\frac{\log(q)^2 \log|q|}{2 \pi }+\frac{\log|q|^3}{3 \pi }\\
&-\frac{i}{2}\log(q)^2+i\log(q)\log|q|\\
&-\frac{5}{6}\pi\log(q)+\frac{5}{6}\pi\log|q|+\frac{i \pi ^2}{2}.
\end{split}
\end{equation}
When $s=\frac13$:
\begin{equation}\label{5F4 1/3 case in terms of F}
\begin{split}
\sum_{n=1}^{\infty}\frac{(1)_n^3}{\left(\frac{1}{3}\right)_n\left(\frac{1}{2}\right)_n\left(\frac{2}{3}\right)_n}\frac{(a-b
n)}{n^3}z^{-n}
=&-\frac{1}{8}F(q)+\frac{1}{24}F(q^3)\\
&+\frac{\log^3(q)}{6 \pi }-\frac{\log^2(q)\log|q|}{2 \pi}+\frac{\log^3|q|}{3 \pi }\\
&-\frac{i}{2}\log^2(q)+i \log(q)
\log|q|\\
&-\pi  \log(q)+\pi\log|q|+\frac{2 i \pi ^2}{3}.
\end{split}
\end{equation}
When $s=\frac14$:
\begin{equation}\label{5F4 1/4 case in terms of F}
\begin{split}
\sum_{n=1}^{\infty}\frac{(1)_n^3}{\left(\frac{1}{4}\right)_n\left(\frac{1}{2}\right)_n\left(\frac{3}{4}\right)_n}\frac{(a-b
n)}{n^3}z^{-n}
=&-\frac{1}{3}F(q)+\frac{1}{6}F\left(q^2\right)\\
&+\frac{\log^3(q)}{6 \pi }-\frac{\log^2(q)\log|q|}{2 \pi }+\frac{\log^3|q|}{3 \pi }\\
&-\frac{1}{2} i \log^2(q)+i\log(q)\log|q|\\
&-\frac{4}{3}\pi\log(q)+\frac{4}{3}\pi\log|q|+i \pi ^2.
\end{split}
\end{equation}
\end{proposition}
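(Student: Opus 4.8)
The plan is to prove all three identities by a single mechanical route: substitute the explicit coefficient formulas of Theorem~\ref{thm extended coeff} into the master formula \eqref{Sun in terms of phii's}, compute the logarithmic $q$-derivatives that appear there, and then recognize the surviving $q$-series as a linear combination of $F(q)$ and $F(q^\kappa)$, where $\kappa=4,3,2$ for $s=\tfrac12,\tfrac13,\tfrac14$ respectively. I will carry out $s=\tfrac12$ in detail; the cases $s=\tfrac13$ and $s=\tfrac14$ are identical except for the numerical values of $\csc^2(\pi s)$ and of the constants multiplying the $\phi_i$.

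The first step is to rewrite the defining expression \eqref{F(q) definition} as a Lambert-type divisor series. Expanding $\Li_3(q^j)=\sum_m q^{jm}/m^3$ and $\Li_2(q^j)=\sum_m q^{jm}/m^2$ and summing over $j$ via the substitution $n=jm$, the inner sums collapse: the coefficient of $q^n$ in $\sum_j\Li_3(q^j)$ is $\sum_{m\mid n}m^{-3}=\sigma_3(n)/n^3$, while (using $\log|q^j|=j\log|q|$) the coefficient of $q^n$ in $\sum_j\log|q^j|\Li_2(q^j)$ is $\log|q|\sum_{m\mid n}(n/m)m^{-2}=\log|q|\,\sigma_3(n)/n^2$. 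Hence
\[
F(q)=-\frac{\log^3|q|}{3\pi}+\frac{120}{\pi}\zeta(3)+\frac{240}{\pi}\sum_{n=1}^\infty\frac{\sigma_3(n)}{n^3}q^n-\frac{240}{\pi}\log|q|\sum_{n=1}^\infty\frac{\sigma_3(n)}{n^2}q^n.
\]
This identity is the bridge between $F$ and the divisor sums occurring in $\phi_3$, and it is the only ingredient that is not pure bookkeeping.

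Next I compute the combinations $\phi_i-q\log|q|\frac{d\phi_i}{dq}$ appearing in \eqref{Sun in terms of phii's}, treating $\log q$ as a formal symbol with $q\,\frac{d}{dq}\log q=1$ and letting $q\,\frac{d}{dq}$ act on a $q$-series by $q^n\mapsto n q^n$. For $s=\tfrac12$ this gives $\phi_1-q\log|q|\frac{d\phi_1}{dq}=\log q-\log|q|$, then $\phi_2-q\log|q|\frac{d\phi_2}{dq}=\tfrac12\log^2q+\tfrac{\pi^2}{2}-\log q\log|q|$, and finally $\phi_3-q\log|q|\frac{d\phi_3}{dq}$ equals $\tfrac16\log^3q+\tfrac{\pi^2}{2}\log q-6\zeta(3)-\tfrac12\log|q|\log^2q-\tfrac{\pi^2}{2}\log|q|$ plus the $q$-series part
\[
-16\Bigl(\sum_n\tfrac{\sigma_3(n)}{n^3}q^n-\log|q|\sum_n\tfrac{\sigma_3(n)}{n^2}q^n\Bigr)+4\Bigl(\sum_n\tfrac{\sigma_3(n)}{n^3}q^{4n}-4\log|q|\sum_n\tfrac{\sigma_3(n)}{n^2}q^{4n}\Bigr).
\]
Substituting into \eqref{Sun in terms of phii's} with $\csc^2(\pi/2)=1$, the $q^n$ series is $\tfrac1\pi(-16)$ times the divisor part of $F(q)$; since that part carries the prefactor $240/\pi$, this contributes $-\tfrac{16}{240}F(q)=-\tfrac1{15}F(q)$. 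Because $\log|q^4|=4\log|q|$, the $q^{4n}$ series is exactly $\tfrac1\pi\cdot4$ times the divisor part of $F(q^4)$, contributing $+\tfrac1{60}F(q^4)$. Everything else is elementary: collecting the powers of $\log q$ and $\log|q|$, the $\zeta(3)$ terms, and the $\pi^2$ constants coming from the leading $\pi^2 i\csc^2(\pi s)$ and from $-i\phi_2$, a short verification reproduces the displayed elementary terms of \eqref{5F4 1/2 case in terms of F}.

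The only real obstacle is organizational rather than conceptual. The answer mixes the holomorphic logarithm $\log q$ (inherited from the $\phi_i$ of Theorem~\ref{thm extended coeff}) with the real logarithm $\log|q|$ (produced by the operator $q\log|q|\frac{d}{dq}$ and present inside $F$), so the two must be tracked separately. In particular one must check that the $\log^3|q|$ and $\zeta(3)$ pieces supplied by the elementary parts of $F(q)$ and $F(q^\kappa)$ are handled consistently: for $s=\tfrac12$ the $F$-terms reintroduce $-\tfrac{8}{\pi}\zeta(3)+\tfrac{2}{\pi}\zeta(3)=-\tfrac6\pi\zeta(3)$, which matches the $\tfrac1\pi(-6\zeta(3))$ coming from $\phi_3$, while the $F$-terms also produce $\tfrac{1}{45\pi}\log^3|q|-\tfrac{16}{45\pi}\log^3|q|=-\tfrac{1}{3\pi}\log^3|q|$, precisely the quantity cancelled by the explicit term $+\tfrac{\log|q|^3}{3\pi}$ displayed in \eqref{5F4 1/2 case in terms of F} (the left-hand side contains no $\log^3|q|$). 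Once the divisor-sum identity of the second step is in place, the cases $s=\tfrac13$ and $s=\tfrac14$ follow the same route with $\kappa=3,2$ and the correspondingly modified constants, so the remainder is routine verification.
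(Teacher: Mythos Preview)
Your proposal is correct and follows exactly the route the paper indicates: the paper's own proof is a single sentence saying that one substitutes the results of Theorem~\ref{thm extended coeff} into \eqref{Sun in terms of phii's}, and you carry out precisely that substitution, supplying the intermediate step of rewriting $F(q)$ as a divisor sum so that the $\sigma_3$-series in $\phi_3$ can be identified. The bookkeeping you outline (tracking $\log q$ versus $\log|q|$, matching the $\zeta(3)$ and $\log^3|q|$ contributions) is accurate, and the cases $s=\tfrac13,\tfrac14$ indeed follow by the same computation with $\csc^2(\pi s)=\tfrac43,2$ and $\kappa=3,2$.
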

\begin{proof}  Proofs follow from combining Theorems
\ref{main theorem} and \ref{thm extended coeff}.  In particular, we
obtain formulas \eqref{5F4 1/2 case in terms of F} through
\eqref{5F4 1/4 case in terms of F}, by substituting the results of
Theorem \ref{thm extended coeff} into \eqref{Sun in terms
of phii's}.
\end{proof}

\begin{proposition}\label{proposition: F in terms of S} Let $q=e^{2\pi i \tau}$, with $\tau=x+i y$, and $y>0$. Then
\begin{equation}\label{final F(q) intermed evaluation}
\begin{split}
F(q)=\frac{120y^3}{\pi^2}S(1,2x,x^2+y^2;2)+\frac{60i}{\pi^2}\sum_{\substack{n,k\\n\ne0}}\frac{(k+n
x)\left((k+n x)^2+3 n^2 y^2\right)}{n^3\left((k+n x)^2+n^2
y^2\right)^2}.
\end{split}
\end{equation}

If $x\in\mathbb{Z}/2$ and $y>0$, then
\begin{equation}\label{final F(q) evaluation}
\begin{split}
F(q)=\frac{120y^3}{\pi^2}S(1,2x,x^2+y^2;2).
\end{split}
\end{equation}

If $2x/(x^2+y^2)\in\mathbb{Z}$ and $y>0$, then
\begin{equation}\label{final F(q) integer case}
\begin{split}
F(q)=\frac{120y^3}{\pi^2}S(1,2x,x^2+y^2;2)+\frac{4i\pi^2}{3}x\left(\frac{x^2+3y^2}
{(x^2+y^2)^2}+x^2+3y^2-5\right).
\end{split}
\end{equation}
\end{proposition}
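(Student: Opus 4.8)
The plan is to compute $F(q)$ directly from its definition \eqref{F(q) definition}, by expanding the polylogarithms into their defining series and converting the resulting $q$-series into a two-dimensional lattice sum over $\mathbb Z+\mathbb Z\tau$ via Poisson summation. Separating real and imaginary parts should produce the Epstein zeta term $\tfrac{120y^3}{\pi^2}S(1,2x,x^2+y^2;2)$ together with the stated secondary double sum, establishing \eqref{final F(q) intermed evaluation}; the two remaining formulas then follow by analyzing that secondary sum.

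Concretely, I would substitute $\Li_3(q^j)=\sum_{k\ge1}q^{jk}/k^3$, $\Li_2(q^j)=\sum_{k\ge1}q^{jk}/k^2$, together with $q=e^{2\pi i\tau}$ and $\log|q|=-2\pi y$, into \eqref{F(q) definition}. The purely real term $-\log^3|q|/(3\pi)$ becomes $\tfrac{8\pi^2y^3}{3}$, and after collecting the $\Li_2,\Li_3$ pieces one is left with $\tfrac{120}{\pi}\zeta(3)+\tfrac{240}{\pi}\sum_{j,k\ge1}\frac{q^{jk}}{k^3}(1+2\pi jky)$. Summing the geometric series in $j$ turns this into a single Lambert-type sum over $k$ involving $(1-q^k)^{-1}$ and $(1-q^k)^{-2}$. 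I would then apply the cotangent partial-fraction expansion $\pi\cot\pi w=\sum_{n\in\mathbb Z}(w+n)^{-1}$ (equivalently the Lipschitz/Poisson summation formula) to replace $(1-q^k)^{-1}$ and its derivative by sums over the second lattice direction $n$, writing $F(q)$ as a sum over $\lambda=k+n\tau$. Grouping by $|\lambda|^2=(k+nx)^2+n^2y^2$ and taking real and imaginary parts should give $\tfrac{120y^3}{\pi^2}\sum_{(n,m)\ne0}|n+m\tau|^{-4}=\tfrac{120y^3}{\pi^2}S(1,2x,x^2+y^2;2)$ for $\Re F(q)$, which also proves \eqref{Re(F(q)) evaluation}, and the stated double sum for $\Im F(q)$.

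The main obstacle is convergence. Although $F(q)$ is absolutely convergent for $|q|<1$, the lattice sums produced by Poisson summation are not: the Epstein part (with exponent $t=2$) converges absolutely, but the secondary sum $\sum_{n\ne0,k}\frac{(k+nx)((k+nx)^2+3n^2y^2)}{n^3((k+nx)^2+n^2y^2)^2}$ behaves like $\sum_k 1/k$ along the lines of fixed $n$, hence is only conditionally convergent. I would therefore fix the order of summation (sum symmetrically over $k$, then over $n$), justify the passage to this order by an Eisenstein/Hecke-type regularization, and track the resulting boundary contributions. This conditional convergence is the genuinely delicate point, and it is also the source of the non-modular behaviour used below.

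Finally I would treat the two special cases by analyzing the secondary sum $D$. When $x\in\mathbb Z/2$ we have $2nx\in\mathbb Z$ for every $n$, so $k\mapsto-k-2nx$ is a bijection of $\mathbb Z$ sending $(k+nx)\mapsto-(k+nx)$ while fixing the denominator and $n^3$; the summand is odd under this reflection, and since a symmetric truncation is preserved up to a bounded shift, $D=0$, giving \eqref{final F(q) evaluation}. When $2x/(x^2+y^2)\in\mathbb Z$ the hypothesis is precisely $2\Re(-1/\tau)\in\mathbb Z$, so the substitution $\tau\mapsto-1/\tau$ sends this case to the structure of the previous one; because $D$ is only conditionally convergent it transforms with an anomalous term (the familiar weight-two anomaly) under $\tau\mapsto-1/\tau$, and computing that boundary term should produce the extra elementary quantity $\tfrac{4i\pi^2}{3}x\big(\tfrac{x^2+3y^2}{(x^2+y^2)^2}+x^2+3y^2-5\big)$ of \eqref{final F(q) integer case}. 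Pinning down the exact constant in this anomaly is where the most care is required, once the convergence issues have been settled.
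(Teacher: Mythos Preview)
Your derivation of \eqref{final F(q) intermed evaluation} is essentially the paper's: expand the polylogarithms, sum the geometric series in $j$ to obtain Lambert-type sums, rewrite those as $\cot(\pi n\tau)$ and $\csc^2(\pi n\tau)$, and insert the classical partial-fraction expansions to get a lattice sum over $(n,k)$. The paper does not treat the conditional convergence as a serious obstacle; the cotangent expansion is taken with the standard symmetric summation in $k$, the $\csc^2$ piece converges absolutely, and the Epstein term with exponent $2$ is absolutely convergent. Your Eisenstein--Hecke regularization is more caution than the argument requires. For \eqref{final F(q) evaluation} the paper is even more economical than your reflection argument: since $x\in\mathbb Z/2$ forces $q\in\mathbb R$, one has $F(q)\in\mathbb R$ directly from the definition, so the imaginary part vanishes.

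For \eqref{final F(q) integer case} your route genuinely differs from the paper's, and here your proposal has a gap. The paper does not invoke $\tau\mapsto-1/\tau$ at all. Instead it evaluates the secondary sum $T(x,y)$ directly: extract the $k=0$ contribution, and for each fixed $k\ne0$ evaluate the inner sum over $n$ by residues (the paper quotes a \texttt{Mathematica} closed form). The result carries a factor $\sin\!\big(\tfrac{2\pi kx}{x^2+y^2}\big)$, which vanishes exactly when $2x/(x^2+y^2)\in\mathbb Z$, leaving an elementary series in $k$ that sums to the stated constant. Your modular-anomaly idea correctly explains \emph{why} the hypothesis is natural, but to execute it you would need the transformation law of $F(q)$ (equivalently, of the elliptic trilogarithm) under $\tau\mapsto-1/\tau$, or else a direct computation of the boundary term arising when the conditionally convergent sum is rearranged under the involution. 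You leave both unspecified, and neither is lighter than the paper's residue computation, which pins down the constant without any modular machinery.
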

\begin{proof}By \eqref{F(q) definition} we obtain
\begin{equation*}
\begin{split}
 F(q)=&\frac{8\pi^2}{3}\left(\Im\tau\right)^3+\frac{120}{\pi}\sum_{n=1}^{\infty}\left(\frac{1}{n^3}+\frac{2}{n^3}\sum_{j=1}^{\infty}q^{j
n}+\frac{4\pi\Im(\tau)}{n^2}\sum_{j=1}^{\infty}j q^{j n}\right)\\
=&\frac{8\pi^2}{3}\left(\Im\tau\right)^3+\frac{120}{\pi}\sum_{n=1}^{\infty}\left(\frac{1}{n^3}
\frac{1+q^n}{1-q^n}+\frac{4\pi
\Im(\tau)}{n^2}\frac{q^n}{(1-q^n)^2}\right)\\
=&\frac{8\pi^2}{3}\left(\Im\tau\right)^3+\frac{60}{\pi}\sum_{\substack{n=-\infty\\n\ne0}}^{\infty}\left(\frac{i\cot(\pi
n \tau)}{n^3} -\frac{\pi \Im(\tau)\csc^2(\pi n \tau)}{n^2}\right).
\end{split}
\end{equation*}
Substitute the partial fractions decompositions:
\begin{align*}
\cot(\pi
n\tau)=\frac{1}{\pi}\sum_{k=-\infty}^{\infty}\frac{1}{k+\tau
n},&&\pi\csc^2(\pi
n\tau)=\frac{1}{\pi}\sum_{k=-\infty}^{\infty}\frac{1}{(k+\tau n)^2},
\end{align*}
to obtain
\begin{equation}\label{main F(q) partial fractions}
F(q)=\frac{8\pi^2}{3}\left(\Im\tau\right)^3+\frac{60}{\pi^2}\sum_{\substack{n,k=-\infty\\n\ne
0}}^{\infty}\frac{i}{n^3(k+n\tau)}-\frac{\Im(\tau)}{n^2(k+n\tau)^2}.
\end{equation}
Formula \eqref{final F(q) intermed evaluation} follows from setting $\tau=x+i y$, and then isolating the real and imaginary parts of the function.
We complete the proof of \eqref{final F(q) evaluation} by noting that $F(q)$ is real valued whenever $x\in\mathbb{Z}/2$.

To complete the proof of \eqref{final F(q) integer case} we need to evaluate the following sum:
\begin{equation*}
T(x,y):=\sum_{\substack{n,k\\n\ne0}}\frac{(k+n
x)\left((k+n x)^2+3 n^2 y^2\right)}{n^3\left((k+n x)^2+n^2
y^2\right)^2}.
\end{equation*}
Extract the $k=0$ term, to obtain
\begin{equation*}
T(x,y)=\frac{\pi^4}{45}\frac{x (x^2 + 3 y^2)}{(x^2 + y^2)^2}+\sum_{\substack{k\\k\ne0}}\sum_{\substack{n\\n\ne0}}\frac{(k+n
x)\left((k+n x)^2+3 n^2 y^2\right)}{n^3\left((k+n x)^2+n^2
y^2\right)^2}.
\end{equation*}
When $k\ne 0$ the inner sum can be evaluated by the residues method.  \texttt{Mathematica} produces the following formula:
\begin{align*}
\sum_{\stackrel{\scriptstyle n =-\infty}{n \neq0}}^{\infty}& \frac{(k+n
x)\left((k+n x)^2+3 n^2 y^2\right)}{n^3\left((k+n x)^2+n^2
y^2\right)^2} \\
=& - \frac{ x(\pi^2 k^2-9y^2-3x^2) }{ 3k^4 } \nonumber \\
& - \pi \sin\left(\frac{2 \pi k x}{x^2+y^2}\right) \frac{(x^2+y^2) \left( \cosh^2 \frac{\pi k y}{x^2+y^2} - \cos^2 \frac{\pi k x}{x^2+y^2} \right) + k \pi y \sinh \frac{2 k \pi y}{x^2+y^2}}{2k^3 \left( \cosh^2 \frac{\pi k y}{x^2+y^2} - \cos^2 \frac{\pi k x}{x^2+y^2} \right)^2}. \nonumber
\end{align*}
If $2x/(x^2+y^2)\in\mathbb{Z}$, then the second term vanishes.  Thus we are left with
\begin{align*}
T(x,y)=&\frac{\pi^4}{45}\frac{x (x^2 + 3 y^2)}{(x^2 + y^2)^2}-\sum_{\substack{k\\k\ne0}}\frac{x(\pi^2 k^2-9y^2-3x^2) }{ 3k^4}\\
=&\frac{\pi^4}{45}x\left(\frac{x^2+3 y^2}{\left(x^2+y^2\right)^2}+x^2+3 y^2-5\right),
\end{align*}
and \eqref{final F(q) integer case} follows.
\end{proof}

{\allowdisplaybreaks
\begin{table}
    \begin{tabular}{c c p{6 in}}%{|p{1 in}|p{2.5 in}|p{2.25 in}|}
        \hline
        \\
        $q$  & $F(q)$\\
\\
        \hline\hline
        \\
        $e^{-2\pi}$ &  $80L_{-4}(2)$\\
        $e^{-2\pi\sqrt{2}}$ & $80\sqrt{2}L_{-8}(2)$ \\
        $e^{-2\pi\sqrt{3}}$ & $135\sqrt{3}L_{-3}(2)$ \\
        $e^{-2\pi\sqrt{4}}$ & $280L_{-4}(2)$ \\
        $e^{-2\pi\sqrt{5}}$ & $100 \sqrt{5} L_{-20}(2)+96 L_{-4}(2)$ \\
        $e^{-2\pi\sqrt{6}}$ & $120 \sqrt{6} L_{-24}(2)+90 \sqrt{3} L_{-3}(2)$ \\
        $e^{-2\pi\sqrt{7}}$ & $175\sqrt{7}L_{-7}(2)$\\
        $e^{-2\pi\sqrt{8}}$ & $280 \sqrt{2}L_{-8}(2)+240L_{-4}(2)$\\
        $e^{-2\pi\sqrt{9}}$ & $560L_{-4}(2)+180 \sqrt{3} L_{-3}(2)$\\
        $e^{-2\pi\sqrt{10}}$ & $200 \sqrt{10}L_{-40}(2)+ 192 \sqrt{2}L_{-8}(2) $\\
        $e^{-2\pi\sqrt{12}}$ & $480 L_{-4}(2)+\frac{1035}{2} \sqrt{3} L_{-3}(2) $\\
        $e^{-2\pi\sqrt{13}}$ & $260 \sqrt{13}L_{-52}(2)+480 L_{-4}(2)$\\
        $e^{-2\pi\sqrt{15}}$ & $\frac{375}{2} \sqrt{15} L_{-15}(2) + 468 \sqrt{3} L_{-3}(2)$\\
        $e^{-2\pi\sqrt{16}}$ & $480 \sqrt{2} L_{-8}(2) + 1100 L_{-4}(2)$\\
        $e^{-2\pi\sqrt{18}}$ & $880 \sqrt{2} L_{-8}(2)+540 \sqrt{3} L_{-3}(2)$\\
        $e^{-2\pi\sqrt{21}}$ & $210 \sqrt{21} L_{-84}(2)+210 \sqrt{7} L_{-7}(2)+480 L_{-4}(2)+360 \sqrt{3} L_{-3}(2)$\\
        $e^{-2\pi\sqrt{22}}$ & $440 \sqrt{22} L_{-88}(2) + 330 \sqrt{11}L_{-11}(2)$\\
        $e^{-2\pi\sqrt{24}}$ & $420 \sqrt{6}L_{-24}(2)+480 \sqrt{2}L_{-8}(2)+720 L_{-4}(2)+495 \sqrt{3} L_{-3}(2)$\\
        $e^{-2\pi\sqrt{25}}$ & $480 \sqrt{5} L_{-20}(2) + 2320 L_{-4}(2)$\\
        $e^{-2\pi\sqrt{28}}$ & $\frac{1435}{2} \sqrt{7} L_{-7}(2)+1920 L_{-4}(2)$\\
        $e^{-2\pi\sqrt{30}}$ & $300 \sqrt{30}L_{-120}(2)+288 \sqrt{6}L_{-24}(2)+225 \sqrt{15}L_{-15}(2)+630 \sqrt{3}L_{-3}(2)$\\
        $e^{-2\pi\sqrt{33}}$ & $330 \sqrt{33} L_{-132}(2)+330 \sqrt{11} L_{-11}(2)+1440 L_{-4}(2)+630 \sqrt{3}
        L_{-3}(2)$\\
        \hline\\
    \end{tabular}
    \caption{Select values of $F(q)$}
    \label{Table:F(q) formulas}
\end{table}
}

\subsection{Convergent rational formulas}
Now we prove rational, convergent, companion series formulas.  Virtually all of these results have appeared in the literature before, although we believe this is the first unified treatment of all of the formulas.  Equation \eqref{1/2 case formula 2} was proved by Zeilberger \cite[Theorem~8]{ZB}.  Formulas \eqref{1/2 case formula 0}, \eqref{1/2 case formula 1}, \eqref{1/2 case formula 3} are due to Guillera \cite{GuilleraThesis}, \cite{G10Rama}.  Equations \eqref{1/3 case formula 1} through \eqref{1/4 case formula 2} were conjectured by Sun using numerical experiments \cite{Sun}.  Formula \eqref{1/4 case formula 1} was subsequently proved by Guillera \cite{G}, and the Hessami-Pilehroods proved \eqref{1/4 case formula 2} \cite{HP}.  Our strategy is to express each
companion series in terms of $F(q)$'s, and then to evaluate $F(q)$
using properties of Epstein zeta functions. The hypergeometric-side
of the formula also requires values of $(a, b, z)$.  It is
straight-forward, albeit tedious, to calculate those quantities.  We
summarize the values of $(a,b,z)$ and the corresponding $q$'s in
Table \ref{Table: Sun's formulas a,b,z,q}.
\begin{theorem}\label{Theorem: proving Sun's formulas} The following formulas are true:
{\allowdisplaybreaks
\begin{align}
%\subsubsection{$q=-e^{-\pi \sqrt{2}}$:}
&\sum_{n=1}^{\infty}(-1)^{n+1} \frac{(1)_n^3}{\left(\frac{1}{2} \right)_n^3} \frac{(4n-1)}{n^3}=16L_{-4}(2),\label{1/2 case formula 0}\\
%
%\subsubsection{$q=- i \, e^{-\pi \sqrt{3}/2}$:}
&\sum_{n=1}^{\infty} \frac{(1)_n^3}{\left(\frac{1}{2} \right)_n^3} \frac{(3n-1)}{n^3}\frac{1}{2^{2n}} =\frac{\pi^2}{2},\label{1/2 case formula 1}\\
%\subsubsection{$q=e^{-3\pi i/4} \,e^{-\pi \frac{\sqrt{7}}{4}}$:}
&\sum_{n=1}^{\infty} \frac{(1)_n^3}{\left(\frac{1}{2} \right)_n^3} \frac{(21n-8)}{n^3}\frac{1}{2^{6n}} =\frac{\pi^2}{6},\label{1/2 case formula 2}\\
%
%\subsubsection{$q=-e^{-\pi}$:}
&\sum_{n=1}^{\infty} (-1)^{n+1}\frac{(1)_n^3}{\left(\frac{1}{2} \right)_n^3} \frac{(3n-1)}{n^3}\frac{1}{2^{3n}} =2L_{-4}(2),\label{1/2 case formula 3}\\
%\subsubsection{$q=e^{-i \pi /3} \, e^{-2 \pi \sqrt{2}/3}$:}
&\sum_{n=1}^{\infty} \frac{(1)_n^3}{\left(\frac{1}{2} \right)_n \left( \frac{1}{3} \right)_n \left(\frac{2}{3} \right)_n} \frac{(10n-3)}{n^3}\left( \frac{2}{27} \right)^{2n} =\frac{\pi^2}{2},\label{1/3 case formula 1}\\
%
%\subsubsection{$q=e^{-i \pi/3} \, e^{-\pi \sqrt{11}/3}$:}
&\sum_{n=1}^{\infty} \frac{(1)_n^3}{\left(\frac{1}{2} \right)_n \left( \frac{1}{3} \right)_n \left(\frac{2}{3} \right)_n} \frac{(11n-3)}{n^3}\left( \frac{16}{27} \right)^n =8 \pi^2,\label{1/3 case formula 2}\\
%
%\subsubsection{$q=-e^{-\pi \sqrt{15}/3}$:}
&\sum_{n=1}^{\infty}(-1)^{n+1}  \frac{(1)_n^3}{\left(\frac{1}{2} \right)_n \left( \frac{1}{3} \right)_n \left(\frac{2}{3} \right)_n} \frac{(15n-4)}{n^3}\frac{1}{4^n} =27 L_{-3}(2),\label{1/3 case formula 3}\\
%
%\subsubsection{$q=-e^{-\pi \sqrt{3}}$:}
&\sum_{n=1}^{\infty} (-1)^{n+1}\frac{(1)_n^3}{\left(\frac{1}{2} \right)_n \left( \frac{1}{4} \right)_n \left(\frac{3}{4} \right)_n} \frac{(5n-1)}{n^3}\left( \frac{3}{4} \right)^{2n} =\frac{45}{2} L_{-3}(2),\label{1/4 case formula 1}\\
%
%\subsubsection{$q=- i \, e^{-\pi \sqrt{7}/2}$:}
&\sum_{n=1}^{\infty} \frac{(1)_n^3}{\left(\frac{1}{2} \right)_n \left( \frac{1}{4} \right)_n \left(\frac{3}{4} \right)_n} \frac{(35n-8)}{n^3}\left( \frac{3}{4} \right)^{4n} =12 \pi^2\label{1/4 case formula 2}.
\end{align}}
\end{theorem}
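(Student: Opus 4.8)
The plan is to treat all eight identities uniformly as specializations of the machinery already assembled, reading each one off from a single point in the modular parameterization. Every identity is tagged by its value of $s\in\{\frac12,\frac13,\frac14\}$, which is visible from the lower Pochhammer parameters (so $(\frac12)_n^3$ signals $s=\frac12$, while $(\frac12)_n(\frac13)_n(\frac23)_n$ signals $s=\frac13$), and by a convergent choice of $(a,b,z)$ with $|z|\ge1$. The explicit rational weights $4n-1$, $3n-1$, $21n-8$, $\dots$ and the geometric factors $2^{-2n}$, $(\frac{16}{27})^n$, $\dots$ encode the pair $(a,b)$ and the value of $z^{-1}$, and the corresponding $q$ is recorded in Table \ref{Table: Sun's formulas a,b,z,q}. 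My first step is to verify, using \eqref{ecu-a-b} together with the theta-function (or Eisenstein-series) parameterization of $z$, that the tabulated $q$ really does reproduce the stated $(a,b,z)$; each case is a finite algebraic verification at one CM point. At this stage I would also record the dichotomy that governs the rest: the alternating series have $z<0$, coming from a real negative $q$ with $\Re\tau=\frac12$, whereas the non-alternating series have $z>1$, coming from a $q$ near the cusp $\tau=0$ where $2\Re\tau/|\tau|^2\in\mathbb{Z}$.

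Next I would invoke the relevant instance of Proposition \ref{Proposition:Companion series reduction} --- equation \eqref{5F4 1/2 case in terms of F}, \eqref{5F4 1/3 case in terms of F}, or \eqref{5F4 1/4 case in terms of F} --- to rewrite the companion series as $c_1F(q)+c_2F(q^m)+E(q)$, where $E(q)$ is the explicit elementary polynomial in $\log q$, $\log|q|$, $\pi$, and $i$ appearing on the right of those formulas. One subtlety must be flagged here: Proposition \ref{Proposition:Companion series reduction} and the coefficient formulas of Theorem \ref{thm extended coeff} are derived for $q$ in a neighborhood of zero, whereas the convergent regime $|z|\ge1$ corresponds to a different locus in the $\tau$-plane. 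Both sides of \eqref{5F4 1/2 case in terms of F} are analytic in $\tau$ on a connected domain and agree near the cusp, so the identity persists by analytic continuation; and because $|z|\ge1$ at the tabulated $q$, the left-hand series converges and coincides with the analytically continued ${}_5\ff_4$, so the literal convergent sum may be substituted.

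The third step is to evaluate $F$ at the arguments $q$ and $q^m$ through Proposition \ref{proposition: F in terms of S}, and this is where the dichotomy above splits the proof. For the alternating formulas the tabulated $q$ is real, so $\tau=\frac12+iy$ gives $\Re\tau\in\mathbb{Z}/2$, and \eqref{final F(q) evaluation} applies to give $F(q)=\frac{120y^3}{\pi^2}S(1,2x,x^2+y^2;2)$; after an $SL_2(\mathbb{Z})$-reduction of the resulting quadratic form when necessary (as in $S(3,4,4;t)=S(3,2,3;t)$) I would reduce each Epstein zeta value to Dirichlet $L$-values by the Glasser and Zucker evaluations, which in practice are exactly the numbers already recorded in Table \ref{Table:F(q) formulas}. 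For the non-alternating formulas $q$ sits near the cusp $\tau=0$ with $2x/(x^2+y^2)\in\mathbb{Z}$, so I would instead apply \eqref{final F(q) integer case}: its extra term $\frac{4i\pi^2}{3}x\left(\frac{x^2+3y^2}{(x^2+y^2)^2}+x^2+3y^2-5\right)$ supplies the $\pi^2$, and one checks that the two Epstein pieces $c_1S+c_2S$ cancel in the combination $c_1F(q)+c_2F(q^m)$ so that no $L$-value survives.

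Finally I would assemble the pieces and simplify $E(q)$. In the real-$q$ case the purely real logarithmic contributions in \eqref{5F4 1/2 case in terms of F}--\eqref{5F4 1/4 case in terms of F} combine to zero, while $\log q=\log|q|+i\pi$ feeds the imaginary terms $-\frac{i}{2}\log^2 q$, $i\log q\log|q|$, and $\frac{i\pi^2}{2}$, which must cancel against the imaginary part of $F(q)$ to leave the advertised rational multiple of the $L$-value; in the cusp case the same bookkeeping must reconcile $E(q)$ with the explicit $i\pi^2$ term of \eqref{final F(q) integer case} to produce a real rational multiple of $\pi^2$. I expect this elementary-term bookkeeping --- tracking the $\log$ and $\pi$ terms, verifying the exact cancellation of the non-real pieces, and confirming that the Epstein parts cancel in the $\pi^2$ cases --- to be the main obstacle, together with the justification that the analytic continuation genuinely reaches the convergent locus $|z|\ge1$. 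Once those are secured, each of the eight identities follows by direct substitution.
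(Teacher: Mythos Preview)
Your plan is essentially the paper's own proof: specialize Proposition~\ref{Proposition:Companion series reduction} at the tabulated $q$, feed the resulting $F(q)$ and $F(q^m)$ through Proposition~\ref{proposition: F in terms of S}, and finish with Glasser--Zucker evaluations of the Epstein zeta values. Your dichotomy between the $L$-value cases (real negative $q$, $\Re\tau\in\mathbb{Z}/2$) and the $\pi^2$ cases (complex $q$ with $2x/(x^2+y^2)\in\mathbb{Z}$) is exactly the split the paper uses, and your remark about analytic continuation to the convergent locus is a point the paper passes over silently.

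One correction to your description of the $\pi^2$ cases. You write that ``the two Epstein pieces $c_1S+c_2S$ cancel in the combination $c_1F(q)+c_2F(q^m)$ so that no $L$-value survives''. That is not the mechanism. What the paper does is equate \emph{imaginary parts} of both sides of \eqref{5F4 1/2 case in terms of F}--\eqref{5F4 1/4 case in terms of F}. In each $\pi^2$ case $q^m$ is real (e.g.\ $q=ie^{-\pi\sqrt{3}/2}$ gives $q^4=e^{-2\pi\sqrt{3}}$), so $F(q^m)$ is real and contributes nothing to the imaginary part; and by \eqref{final F(q) integer case} the Epstein term in $F(q)$ is real while only the explicit elementary correction $\frac{4i\pi^2}{3}x(\cdots)$ is imaginary. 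So the Epstein pieces are not cancelled against one another---they are projected out by taking $\Im$. If you tried to check an algebraic cancellation between $c_1S$ and $c_2S$ you would not find one, since the two quadratic forms have different discriminants. With that adjustment your outline matches the paper's proof. (Minor slip: there are nine identities, not eight.)
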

\begin{proof}
We begin by proving \eqref{1/2 case formula 0}. Set
$q=-e^{-\pi\sqrt{2}}$ in \eqref{5F4 1/2 case in terms of F}. We have
$(a,b,z)=\left(\frac{1}{2},2,-1\right)$.  The formula reduces to
\begin{equation*}
\frac{1}{2}\sum_{n=1}^{\infty}(-1)^{n+1}
\frac{(1)_n^3}{\left(\frac{1}{2} \right)_n^3}
\frac{(4n-1)}{n^3}=-\frac{1}{15}F\left(-e^{-\pi\sqrt{2}}\right)+\frac{1}{60}F\left(e^{-4\pi\sqrt{2}}\right).
\end{equation*}
Apply \eqref{final F(q) evaluation} to reduce the equation to
\begin{align*}
\sum_{n=1}^{\infty}(-1)^{n+1}\frac{(1)_n^3}{\left(\frac{1}{2}
\right)_n^3} \frac{(4n-1)}{n^3}=&\frac{64 \sqrt{2}}{\pi ^2}
S(1,0,8;2)-\frac{4 \sqrt{2}}{\pi ^2}
S\left(1,1,\frac{3}{4};2\right)\\
=&\frac{64 \sqrt{2}}{\pi ^2}\left(
S(1,0,8;2)-S\left(3,4,4;2\right)\right).
\end{align*}
Glasser and Zucker have evaluated $S(1,0,8;t)$ for all $t$
\cite{GZ}. Their method also applies to $S(3,4,4;t)=S(3,2,3;t)$.  When $t=2$,
the formulas become
\begin{align*}
S(1,0,8;2)=&\frac{7\pi^2}{48} L_{-8}(2)+\frac{\pi
^2}{8\sqrt{2}}L_{-4}(2),\\
S(3,4,4;2)=&\frac{7\pi^2}{48} L_{-8}(2)-\frac{\pi
^2}{8\sqrt{2}}L_{-4}(2),
\end{align*}
and the result follows.

Next consider \eqref{1/2 case formula 1}. Set
$q=ie^{-\pi\sqrt{3}/2}$ in \eqref{5F4 1/2 case in terms of F}. We
have $(a,b,z)=\left(-\frac{i}{2},-\frac{3i}{2},4\right)$.  The
formula reduces to
\begin{equation*}
\frac{i}{2}\sum_{n=1}^{\infty} \frac{(1)_n^3}{\left(\frac{1}{2}
\right)_n^3}
\frac{(3n-1)}{n^3}\frac{1}{2^{2n}}=\frac{3i\pi^2}{8}-\frac{1}{15}F\left(ie^{-\pi\sqrt{3}/2}\right)+\frac{1}{60}F\left(e^{-2\pi\sqrt{3}}\right).
\end{equation*}
Equate the imaginary parts, and apply \eqref{final F(q) integer case}. The equation reduces to
\begin{align*}
\sum_{n=1}^{\infty} \frac{(1)_n^3}{\left(\frac{1}{2} \right)_n^3} \frac{(3n-1)}{n^3}\frac{1}{2^{2n}}=&\frac{3\pi^2}{4}-\frac{2}{15}\Im F\left(ie^{-\pi\sqrt{3}/2}\right)\\
%=&\frac{3\pi^2}{4}-\frac{4}{\pi^2}\sum_{\substack{n,k\\n\ne 0}}\frac{(4 k + n) (8 k^2 + 4 k n + 5 n^2)}{n^3 (4 k^2 + 2 k n + n^2)^2}\\
=&\frac{\pi^2}{2}.
\end{align*}

Next we prove \eqref{1/2 case formula 2}.  Set $q=e^{3\pi
i/4}e^{-\pi\sqrt{7}/4}$ in \eqref{5F4 1/2 case in terms of F}.  We
have $(a,b,z)=\left(-2i,-\frac{21i}{4},64\right)$.  The formula
reduces to
\begin{align*}
\frac{i}{4}\sum_{n=1}^{\infty} \frac{(1)_n^3}{\left(\frac{1}{2} \right)_n^3} \frac{(21n-8)}{n^3}\frac{1}{2^{6n}}=\frac{9\pi^2 i}{64}-\frac{1}{15}F\left(e^{3\pi i/4}e^{-\pi\sqrt{7}/4}\right)+\frac{1}{60}F\left(-e^{-\pi\sqrt{7}}\right).
\end{align*}
Equate the imaginary parts, then apply \eqref{final F(q) integer case}.  We obtain
\begin{align*}
\sum_{n=1}^{\infty} \frac{(1)_n^3}{\left(\frac{1}{2} \right)_n^3} \frac{(21n-8)}{n^3}\frac{1}{2^{6n}}=&\frac{9\pi^2 }{16}-\frac{4}{15}\Im F\left(e^{3\pi i/4}e^{-\pi\sqrt{7}/4}\right)\\
%=&\frac{9\pi^2}{16}-\frac{1}{\pi^2}\sum_{\substack{n,k\\n\ne 0}}\frac{(8k+3n)(32k^2+24k n+15n^2)}{n^3(4k^2+3k n+n^2)^2}\\
=&\frac{\pi^2}{6}.
\end{align*}

Next consider \eqref{1/2 case formula 3}.  Set $q=-e^{-\pi}$ in
\eqref{5F4 1/2 case in terms of F}.  We have $(a,b,z)=(1,3,-8)$. The
formula reduces to
\begin{align*}
\sum_{n=1}^{\infty} \frac{(1)_n^3}{\left(\frac{1}{2} \right)_n^3} \frac{(3n-1)}{n^3}\frac{(-1)^{n+1}}{2^{3n}}=-\frac{1}{15}F\left(-e^{-\pi}\right)+\frac{1}{60}F\left(e^{-4\pi}\right).
\end{align*}
Apply \eqref{final F(q) evaluation} to
obtain
\begin{align*}
\sum_{n=1}^{\infty} \frac{(1)_n^3}{\left(\frac{1}{2} \right)_n^3}
\frac{(3n-1)}{n^3}\frac{(-1)^{n+1}}{2^{3n}}=&-\frac{1}{\pi^2}S\left(1,1,\frac{1}{2};2\right)+\frac{16}{\pi^2}S(1,0,4;2)\\
=&2L_{-4}(2).
\end{align*}
In the final step we used $S(1,0,4;2)=\frac{7\pi^2}{24}
L_{-4}(2)$, and
$S\left(1,1,\frac{1}{2};2\right)=4S(2,2,1;2)=4S(1,0,1;2)=\frac{8\pi^2}{3}L_{-4}(2)$.  Both of these evaluations follow from the results of
Glasser and Zucker \cite{GZ}.

Now consider \eqref{1/3 case formula 1}.  Set $q=e^{2\pi
i/3}e^{-2\pi\sqrt{2}/3}$ in \eqref{5F4 1/3 case in terms of F}. We
have $(a,b,z)=\left(-i,-\frac{10i}{3},\frac{27}{2}\right)$.  The
formula reduces to
\begin{align*}
\frac{i}{3}\sum_{n=1}^{\infty} \frac{(1)_n^3}{\left(\frac{1}{3} \right)_n\left(\frac{1}{2}\right)_n\left(\frac{2}{3}\right)_n} \frac{(10n-3)}{n^3}\left(\frac{2}{27}\right)^n=\frac{26\pi^2 i}{81}-\frac{1}{8}F\left(e^{2\pi i/3}e^{-2\pi\sqrt{2}/3}\right)+\frac{1}{24}F\left(e^{-2\pi\sqrt{2}}\right).
\end{align*}
Take imaginary parts, then apply \eqref{final F(q) integer case}. We obtain
\begin{align*}
\sum_{n=1}^{\infty} \frac{(1)_n^3}{\left(\frac{1}{3} \right)_n\left(\frac{1}{2}\right)_n\left(\frac{2}{3}\right)_n} \frac{(10n-3)}{n^3}\left(\frac{2}{27}\right)^n&=\frac{26\pi^2}{27}-\frac{3}{8}\Im F\left(e^{2\pi i/3}e^{-2\pi\sqrt{2}/3}\right)\\
%&=\frac{26\pi^2}{27}-\frac{15}{2\pi^2}\sum_{\substack{n,k\\n\ne 0}}\frac{(3k+n)(9k^2+6k n+7n^2)}{n^3(3k^2+2k n+n^2)^2}\\
&=\frac{\pi^2}{2}.
\end{align*}

Next we prove \eqref{1/3 case formula 2}.  Set $q=e^{\pi
i/3}e^{-\pi\sqrt{11}/3}$ in \eqref{5F4 1/3 case in terms of F}. We
have
$(a,b,z)=\left(-\frac{i}{4},-\frac{11i}{12},\frac{27}{16}\right)$.
The formula reduces to
\begin{align*}
\frac{i}{12}\sum_{n=1}^{\infty} \frac{(1)_n^3}{\left(\frac{1}{3} \right)_n\left(\frac{1}{2}\right)_n\left(\frac{2}{3}\right)_n} \frac{(11n-3)}{n^3}\left(\frac{16}{27}\right)^n=\frac{64\pi^2 i}{81}-\frac{1}{8}F\left(e^{\pi i/3}e^{-\pi\sqrt{11}/3}\right)+\frac{1}{24}F\left(-e^{-\pi\sqrt{11}}\right).
\end{align*}
Take imaginary parts, then apply \eqref{final F(q) integer case}.  We have
\begin{align*}
\sum_{n=1}^{\infty} \frac{(1)_n^3}{\left(\frac{1}{3} \right)_n\left(\frac{1}{2}\right)_n\left(\frac{2}{3}\right)_n} \frac{(11n-3)}{n^3}\left(\frac{16}{27}\right)^n&=\frac{256\pi^2}{27}-\frac{3}{2}\Im F\left(e^{\pi i/3}e^{-\pi\sqrt{11}/3}\right)\\
%&=\frac{256\pi^2}{27}-\frac{15}{2\pi^2}\sum_{\substack{n,k\\n\ne 0}}\frac{(6k+n)(18k^2+6k n+17n^2)}{n^3(3k^2+k n+n^2)^2}\\
&=8\pi^2.
\end{align*}

Now prove \eqref{1/3 case formula 3}.  Set $q=-e^{-\pi\sqrt{15}/3}$
in \eqref{5F4 1/3 case in terms of F}. We have
$(a,b,z)=\left(\frac{4}{3\sqrt{3}},\frac{5}{\sqrt{3}},-4\right)$.
The formula reduces to
\begin{align*}
\frac{1}{3\sqrt{3}}\sum_{n=1}^{\infty} \frac{(1)_n^3}{\left(\frac{1}{3} \right)_n\left(\frac{1}{2}\right)_n\left(\frac{2}{3}\right)_n} \frac{(15n-4)}{n^3}\frac{(-1)^{n+1}}{4^n}=-\frac{1}{8}F\left(-e^{-\pi\sqrt{15}/3}\right)+\frac{1}{24}F\left(-e^{-\pi\sqrt{15}}\right).
\end{align*}
Apply \eqref{final F(q) evaluation} to
obtain
\begin{align*}
\sum_{n=1}^{\infty} \frac{(1)_n^3}{\left(\frac{1}{3} \right)_n\left(\frac{1}{2}\right)_n\left(\frac{2}{3}\right)_n} \frac{(15n-4)}{n^3}\frac{(-1)^{n+1}}{4^n}=&-\frac{75\sqrt{5}}{8\pi^2}S\left(1,1,\frac{2}{3};2\right)+\frac{675\sqrt{5}}{8\pi^2}S\left(1,1,4;2\right)\\
=&\frac{675\sqrt{5}}{8\pi^2}\left(S(1,1,4;2)-S(2,3,3;2)\right).
\end{align*}
Glasser and Zucker have calculated $S(1,1,4;t)$ for all $t$ \cite{GZ}. Their method also applies
to $S(2,3,3;t)=S(2,1,2;t)$.  When $t=2$ the formulas reduce to
\begin{align*}
S(1,1,4;2)=&\frac{\pi^2}{6}L_{-15}(2)+\frac{4\pi^2}{25\sqrt{5}}L_{-3}(2),\\
S(2,3,3;2)=&\frac{\pi^2}{6}L_{-15}(2)-\frac{4\pi^2}{25\sqrt{5}}L_{-3}(2),
\end{align*}
and \eqref{1/3 case formula 3} follows.

Next we prove \eqref{1/4 case formula 1}.  Set $q=-e^{-\pi\sqrt{3}}$
in \eqref{5F4 1/4 case in terms of F}. We have
$(a,b,z)=\left(\frac{1}{\sqrt{3}},\frac{5}{\sqrt{3}},-\frac{16}{9}\right)$.
The formula reduces to
\begin{align*}
\frac{1}{\sqrt{3}}\sum_{n=1}^{\infty} \frac{(1)_n^3}{\left(\frac{1}{4} \right)_n\left(\frac{1}{2}\right)_n\left(\frac{3}{4}\right)_n} \frac{(5n-1)}{n^3}(-1)^{n+1}\left(\frac{3}{4}\right)^{2n}=-\frac{1}{3}F\left(-e^{-\pi\sqrt{3}}\right)+\frac{1}{6}F\left(e^{-2\pi\sqrt{3}}\right).
\end{align*}
By \eqref{final F(q) evaluation}, we have
\begin{align*}
\sum_{n=1}^{\infty} \frac{(1)_n^3}{\left(\frac{1}{4} \right)_n\left(\frac{1}{2}\right)_n\left(\frac{3}{4}\right)_n} \frac{(5n-1)}{n^3}(-1)^{n+1}\left(\frac{3}{4}\right)^{2n}=&-\frac{45}{\pi^2}S(1,1,1;2)+\frac{180}{\pi^2}S(1,0,3;2)\\
=&\frac{45}{2}L_{-3}(2).
\end{align*}
Glasser and Zucker proved that $S(1,0,3;2)=\frac{3\pi^2}{8}L_{-3}(2)$, and $S(1,1,1;2)=\pi^2 L_{-3}(2)$ \cite{GZ}.

Finally prove \eqref{1/4 case formula 2}.  Set $q=i
e^{-\pi\sqrt{7}/2}$ in \eqref{5F4 1/4 case in terms of F}. We have
$(a,b,z)=\left(-\frac{4 i}{9},-\frac{35
i}{18},\frac{256}{81}\right)$.  The formula reduces to
\begin{align*}
\frac{i}{18}\sum_{n=1}^{\infty} \frac{(1)_n^3}{\left(\frac{1}{4} \right)_n\left(\frac{1}{2}\right)_n\left(\frac{3}{4}\right)_n} \frac{(35n-8)}{n^3}\left(\frac{3}{4}\right)^{4n}=\frac{7\pi^2i}{8}-\frac{1}{3}F\left(i e^{-\pi\sqrt{7}/2}\right)+\frac{1}{6}F\left(-e^{-\pi\sqrt{7}}\right).
\end{align*}
Take the imaginary part, then apply \eqref{final F(q) integer case}.  We obtain
\begin{align*}
\sum_{n=1}^{\infty} \frac{(1)_n^3}{\left(\frac{1}{4} \right)_n\left(\frac{1}{2}\right)_n\left(\frac{3}{4}\right)_n} \frac{(35n-8)}{n^3}\left(\frac{3}{4}\right)^{4n}=&\frac{63\pi^2}{4}-6\Im F\left(i e^{-\pi\sqrt{7}/2}\right)\\
%=&\frac{63\pi^2}{4}-\frac{45}{\pi^2}\sum_{\substack{n,k\\n\ne0}}\frac{(4 k+n) \left(8 k^2+4 k n+11 n^2\right)}{n^3 \left(2 k^2+k n+n^2\right)^2}\\
=&12\pi^2.
\end{align*}
\end{proof}

\begin{table}
    \begin{tabular}{c c c c c p{6 in}|}%{|p{1 in}|p{2.5 in}|p{2.25 in}|}
        \hline
        \\
        $s$ & $q$ & $a$ & $b$ & $z$  \\
        \\
        \hline\hline
        \\
        $\frac{1}{2}$ & $-e^{-\pi\sqrt{2}}$ & $\frac{1}{2}$ & $2$ &
        $-1$\\\\
        $\frac{1}{2}$ & $ie^{-\pi\sqrt{3}/2}$ & $-\frac{i}{2}$ & $-\frac{3i}{2}$ &
        $4$\\\\
        $\frac{1}{2}$ & $e^{3\pi i/4}e^{-\pi\sqrt{7}/4}$ & $-2i$ & $-\frac{21i}{4}$ &
        $64$\\\\
        $\frac{1}{2}$ & $-e^{-\pi}$ & $1$ & $3$ &
        $-8$\\\\
        $\frac{1}{3}$ & $e^{2\pi i/3}e^{-2\pi\sqrt{2}/3}$ & $-i$ & $-\frac{10i}{3}$ &
        $\frac{27}{2}$\\\\
        $\frac{1}{3}$ & $e^{\pi i/3}e^{-\pi\sqrt{11}/3}$ & $-\frac{i}{4}$ & $-\frac{11i}{12}$ &
        $\frac{27}{16}$\\\\
        $\frac{1}{3}$ & $-e^{-\pi\sqrt{15}/3}$ & $\frac{4}{3\sqrt{3}}$ & $\frac{5}{\sqrt{3}}$ &
        $-4$\\\\
        $\frac{1}{4}$ & $-e^{-\pi\sqrt{3}}$ & $\frac{1}{\sqrt{3}}$ & $\frac{5}{\sqrt{3}}$ &
        $-\frac{16}{9}$\\\\
        $\frac{1}{4}$& $ie^{-\pi\sqrt{7}/2}$ & $-\frac{4i}{9}$ & $-\frac{35i}{18}$& $\frac{256}{81}$\\
        \\
        \hline\\
    \end{tabular}
\caption{Values of $(a,b,z)$ in Theorem \ref{Theorem: proving Sun's
formulas}}
\label{Table: Sun's formulas a,b,z,q}
\end{table}

Table \ref{Table: Sun's formulas a,b,z,q}
summarizes the values of
$(a, b, z)$ and $q$ in Theorem \ref{Theorem:
proving Sun's formulas}.  These values also lead to divergent formulas
for $1/\pi$. For instance, when
$s=\frac13$ and $(a,b,z)=\left(\frac{4}{3\sqrt{3}},\frac{5}{\sqrt{3}},-4\right)$,
we obtain \eqref{1/3 case formula 3}, and
\begin{equation*}
\frac{1}{\pi}=\frac{4}{3\sqrt{3}}
\hpg43{\frac{1}{3},\,\frac{1}{2},\,\frac{2}{3},\,\frac{19}{15}}{1,1,\frac{4}{15}}{\,-4}.
%{_4{\ff}_3}\left(\begin{matrix}
%\frac{1}{3}, & \frac{1}{2}, & \frac{2}{3}, & \frac{19}{15} \\[1.1ex]
%&  1, & 1, & \frac{4}{15} \end{matrix} \biggm| -4 \right).
\end{equation*}
The
right-hand side equals $.3183098\dots$, which
agrees perfectly with the expected numerical value of $1/\pi$.

\subsection{Divergent rational formulas}
Next we examine divergent hypergeometric formulas for Dirichlet $L$-values.  These are companions to the convergent formulas for $1/\pi$.  Since the identities have $|z|<1$, we have substituted a ${_5{\ff}_4}$ function
for the divergent companion series:
\begin{equation}\label{Companion in terms of 5F4}
\begin{split}
\sum_{n=1}^{\infty}&\frac{(1)_n^3}{(s)_n\left(\frac{1}{2}\right)_n(1-s)_n}\frac{(a-b n)}{n^3}z^{-n}\\
&=%\frac{2(a-b)}{s(1-s)z}
%{{_5{\ff}_4}}\left(\begin{matrix}1, & 1, & 1, & 1, & 2-\frac{a}{b} \\[1.1ex]
% &\frac32, & 1+s, & 2-s, & 1-\frac{a}{b} \end{matrix} \biggm| z^{-1} \right).
\frac{2(a-b)}{s(1-s)z}\hpg54{1,\,1,\,1,\,1,\,2-\frac{a}{b}}{\frac32,1+s,2-s,1-\frac{a}{b}}{\,z^{-1}}.
\end{split}
\end{equation}
The ${_5{\ff}_4}$ function has a branch cut on the interval $[1,\infty)$ \cite{DL}.  When $z^{-1}$ lies on the branch cut, the function takes a complex value.  The real part of the function is uniquely defined, but the sign of the imaginary part depends on how we approach the branch cut.  We use the same computational method as \texttt{Mathematica 8}, namely when $z^{-1}\in[1,\infty)$, we define ${_5{\ff}_4}\left(\dots\bigm|z^{-1}\right)=\lim_{\delta\mapsto0}{{_5{\ff}_4}}\left(\dots\bigm|z^{-1}-i\delta\right)$.

\begin{theorem}\label{Theorem: Divergent rational cases}
The following identity holds:
\begin{equation}
\frac{2(a-b)}{s(1-s)z}\hpg54{1,\,1,\,1,\,1,\,2-\frac{a}{b}}{\frac32,1+s,2-s,1-\frac{a}{b}}{\,z^{-1}}=L(2),
\end{equation}
for the values of $s$, $(a,b,z)$, and $L(2)$ in Tables \ref{Table:divergent abzq cases z<0} and \ref{Table:divergent abzq cases z>0}.
\end{theorem}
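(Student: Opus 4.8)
The plan is to argue exactly as in the proof of Theorem \ref{Theorem: proving Sun's formulas}, the only difference being that now $|z|<1$, so the companion series diverges and must be read as the analytically continued ${}_5{\ff}_4$ on the left-hand side via \eqref{Companion in terms of 5F4}. For each row of Tables \ref{Table:divergent abzq cases z<0} and \ref{Table:divergent abzq cases z>0} I would first substitute the tabulated value of $q$ into whichever reduction of Proposition \ref{Proposition:Companion series reduction} matches $s$, i.e.\ \eqref{5F4 1/2 case in terms of F} when $s=\frac12$, \eqref{5F4 1/3 case in terms of F} when $s=\frac13$, and \eqref{5F4 1/4 case in terms of F} when $s=\frac14$. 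This rewrites the ${}_5{\ff}_4$ as a fixed rational combination of $F(q)$ and $F(q^k)$ (with $k\in\{2,3,4\}$) together with the explicit elementary terms in $\log q$, $\log|q|$, $i$, and $\pi^2$ recorded in those formulas.

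The second step is to evaluate the two values of $F$ using Proposition \ref{proposition: F in terms of S}. Writing $q=e^{2\pi i(x+iy)}$, the tabulated $q$'s are arranged so that either $x\in\mathbb{Z}/2$ (a real $q$, from the $z<0$ table) or $2x/(x^2+y^2)\in\mathbb{Z}$ (a complex $q$ on the boundary, from the $z>0$ table). In the first case \eqref{final F(q) evaluation} applies directly; in the second case I would invoke \eqref{final F(q) integer case}, equate imaginary parts, and cancel the explicit $\frac{4i\pi^2}{3}x(\cdots)$ correction against the elementary pieces, exactly as in the corresponding computations of Theorem \ref{Theorem: proving Sun's formulas}. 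For the $z>0$ rows one must also fix the sign of $\Im\,{}_5{\ff}_4$ using the branch-cut convention $\lim_{\delta\to0}{}_5{\ff}_4(\cdots\mid z^{-1}-i\delta)$ stated before the theorem, which determines which of $\pm L(2)$ is selected.

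After these reductions every row is expressed through two values $S(A,B,C;2)$ of the Epstein zeta function. The final step reduces those to Dirichlet $L$-values: I would first replace each non-reduced form by its reduced representative using identities of the type $S(A,B,C;t)=S(A',B',C';t)$ (as with $S(3,4,4;t)=S(3,2,3;t)$ in the worked example), and then apply the Glasser--Zucker evaluations \cite{GZ}, which hold precisely when $An^2+Bnm+Cm^2$ lies in a genus containing a single class. Collecting the resulting $L_k(2)$'s and checking that the elementary terms cancel then yields the tabulated value $L(2)$.

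The main obstacle is this last step: there is no a priori reason that the two quadratic forms produced by a given $q$ should lie in one-class-per-genus classes, and the scheme succeeds only for those $q$ where they do. Hence the real content is the selection of the $q$'s in the two tables so that (i) $(a,b,z)$ is as listed, (ii) the discriminants of the arising forms have one class per genus, and (iii) the complex-$q$ rows satisfy $2x/(x^2+y^2)\in\mathbb{Z}$ so that the imaginary-part extraction is clean. A secondary technical point is the bookkeeping of the branch of the ${}_5{\ff}_4$ on $[1,\infty)$ for the $z>0$ table, where the sign of the imaginary part, and hence the reality of the final answer, hinges on the stated limiting convention.
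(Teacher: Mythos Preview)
Your plan is correct and matches the paper's proof, which explicitly says ``Proofs are the same as in Theorem~\ref{Theorem: proving Sun's formulas}'' and then works one representative case ($s=\tfrac12$, $q=e^{-\pi\sqrt{7}}$) via \eqref{5F4 1/2 case in terms of F}, \eqref{final F(q) evaluation}, and the Glasser--Zucker evaluations of $S(1,0,28;2)$ and $S(4,0,7;2)$.

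There is one factual slip in your description. You say the $z>0$ table consists of \emph{complex} $q$'s needing \eqref{final F(q) integer case} and an imaginary-part extraction. In fact every $q$ in \emph{both} tables is real: the $z<0$ rows have $q=-e^{-\pi\sqrt{v}}$ (so $x=\tfrac12$) and the $z>0$ rows have $q=e^{-\pi\sqrt{v}}$ (so $x=0$). Hence \eqref{final F(q) evaluation} applies directly in all cases, and $F(q)$, $F(q^k)$ are real. The imaginary part of $L(2)$ in the $z>0$ rows does not come from $F$ at all; it comes from the elementary terms of Proposition~\ref{Proposition:Companion series reduction}. For real positive $q$ one has $\log q=\log|q|$, so in \eqref{5F4 1/2 case in terms of F} the surviving imaginary piece is $\tfrac{i}{2}\log^2|q|+\tfrac{i\pi^2}{2}$, which for $q=e^{-\pi\sqrt{7}}$ gives exactly the $4\pi^2 i$ recorded in Table~\ref{Table:divergent abzq cases z>0}. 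This misreading is harmless for the argument (with $x=0$ your proposed route via \eqref{final F(q) integer case} degenerates to \eqref{final F(q) evaluation} anyway), but your account of \emph{where} the $i\pi^2$ contributions originate should be corrected.
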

\begin{proof}  Proofs are the same as in Theorem \ref{Theorem: proving Sun's formulas}, so we only consider one example in detail.
Set $q=e^{-\pi \sqrt{7}}$ in \eqref{5F4 1/2 case in terms of F}.  By Table \ref{Table:divergent abzq cases z>0}, we have $s=\frac12$ and $(a,b,z)=\left(\frac{5}{16},\frac{21}{8},\frac{1}{64}\right)$.  Applying \eqref{final F(q) evaluation} and then \eqref{Companion in terms of 5F4}, reduces the formula reduces to
\begin{align*}
-1184~\hpg54{1,\,1,\,1,\,1,\,\frac{79}{42}}{\frac32,\frac32,\frac32,\frac{37}{42}}{\,64}
%-1184~{{_5{\ff}_4}}\left(\begin{matrix}
%1, & 1, & 1, & 1, & \frac{79}{42} \\[1.1ex]
%&  \frac32, & \frac32, & \frac32, & \frac{37}{42} \end{matrix} \biggm| 64 \right)
&=4 i \pi^2  -
 \frac{1}{15} F\left(e^{-\pi\sqrt{7}}\right)+ \frac{1}{60} F\left(e^{-4 \pi\sqrt{7}}\right)\\
 &=4 i \pi^2-\frac{112 \sqrt{7}}{\pi ^2}\left(S(4,0,7;2)-S(1,0,28;2)\right).
\end{align*}
By the results of Glasser and Zucker \cite{GZ}, we obtain
\begin{align*}
S(1,0,28;2)&=\frac{41\pi^2}{384}L_{-7}(2)+\frac{2 \pi ^2}{7 \sqrt{7}}L_{-4}(2),\\
S(4,0,7;2)&=\frac{41\pi^2}{384}L_{-7}(2)-\frac{2 \pi ^2}{7 \sqrt{7}}L_{-4}(2),
\end{align*}
and we recover the value of $L(2)$ in Table \ref{Table:divergent abzq cases z>0}.  After simplifying, we find that
\begin{equation*}
\hpg54{1,\,1,\,1,\,1,\,\frac{79}{42}}{\frac32,\frac32,\frac32,\frac{37}{42}}{\,64}
%{{_5{\ff}_4}}\left(\begin{matrix}
%1, & 1, & 1, & 1, & \frac{79}{42} \\[1.1ex]
%&  \frac32, & \frac32, & \frac32, & \frac{37}{42} \end{matrix} \biggm| 64 \right)
=-\frac{2}{37}L_{-4}(2)-\frac{1}{296}\pi^2i.
\end{equation*}
All of the formulas in Tables \ref{Table:divergent abzq cases z<0} and \ref{Table:divergent abzq cases z>0} follow from analogous arguments.
\end{proof}

\begin{table}
    \begin{tabular}{c c c c c c}
        \hline
        \\
        $s$ & $q$ & $a$ & $b$ & $z<0$ & $L(2)$ \\ \\
        \hline \hline \\
        $\frac12$ & $-e^{-\pi \sqrt{2}}$  & $\frac{1}{2}$ & $\frac42$ & $-1$ & $8L_{-4}(2)$ \\ \\
        $\frac12$ & $-e^{-\pi \sqrt{4}}$  & $\frac{1}{2\sqrt{2}}$ & $\frac{6}{2\sqrt2}$ & $-\frac{1}{8}$ & $16 \sqrt2 L_{-8}(2)$ \\ \\
        \hline \\
        $\frac13$ & $-e^{-\pi \sqrt{9/3}}$  & $\frac{\sqrt{3}}{4}$ & $\frac{5\sqrt{3}}{4}$ & $-\frac{9}{16}$ & $10 \sqrt3 L_{-3}(2)$ \\ \\
        $\frac13$ & $-e^{-\pi \sqrt{17/3}}$ & $\frac{7}{12\sqrt3}$ & $\frac{51}{12\sqrt3}$ & $-\frac{1}{16}$ & $30 \sqrt3 L_{-3}(2)$ \\ \\
        $\frac13$ & $-e^{-\pi \sqrt{25/3}}$ & $\frac{\sqrt{15}}{12}$ & $\frac{9\sqrt{15}}{12}$ & $-\frac{1}{80}$ & $15\sqrt{15} L_{-15}(2)$ \\ \\
        $\frac13$ & $-e^{-\pi \sqrt{41/3}}$ & $\frac{106}{192\sqrt3}$ & $\frac{1230}{192\sqrt3}$ & $-\frac{1}{2^{10}}$ & $120 \sqrt3 L_{-3}(2)$ \\ \\
        $\frac13$ & $-e^{-\pi \sqrt{49/3}}$ & $\frac{26\sqrt7}{216}$ & $\frac{330\sqrt7}{216}$ & $-\frac{1}{3024}$ & $70\sqrt7 L_{-7}(2)$ \\ \\
        $\frac13$ & $-e^{-\pi \sqrt{89/3}}$ & $\frac{827}{1500\sqrt3}$ & $\frac{14151}{1500\sqrt3}$ & $-\frac{1}{500^2}$ & $390 \sqrt3 L_{-3}(2)$ \\ \\
        \hline \\
        $\frac14$ & $-e^{-\pi \sqrt{5}}$  & $\frac{3}{8}$ & $\frac{20}{8}$ & $-\frac{1}{4}$ & $32L_{-4}(2)$ \\ \\
        $\frac14$ & $-e^{-\pi \sqrt{7}}$  & $\frac{8}{9\sqrt7}$ & $\frac{65}{9\sqrt7}$ & $-\frac{16^2}{63^2}$ & $\frac{35}{2} \sqrt7 L_{-7}(2)$ \\ \\
        $\frac14$ & $-e^{-\pi \sqrt{9}}$  & $\frac{3\sqrt3}{16}$ & $\frac{28\sqrt3}{16}$ & $-\frac{1}{48}$ & $60 \sqrt3 L_{-3}(2)$ \\ \\
        $\frac14$ & $-e^{-\pi \sqrt{13}}$ & $\frac{23}{72}$ & $\frac{260}{72}$ & $-\frac{1}{18^2}$ & $160 L_{-4}(2)$ \\ \\
        $\frac14$ & $-e^{-\pi \sqrt{25}}$ & $\frac{41\sqrt5}{288}$ & $\frac{644\sqrt5}{288}$ & $-\frac{1}{5 \cdot 72^2}$ & $160 \sqrt5 L_{-20}(2)$ \\ \\
        $\frac14$ & $-e^{-\pi \sqrt{37}}$ & $\frac{1123}{3528}$ & $\frac{21460}{3528}$ & $-\frac{1}{882^2}$ & $800 L_{-4}(2)$ \\ \\
        \hline
    \end{tabular}
    \vskip 0.5cm
\caption{Values of $(a,b,z)$ with $z<0$ in Theorem \ref{Theorem: Divergent rational cases}}
\label{Table:divergent abzq cases z<0}
\end{table}

\begin{table}
    \begin{tabular}{c c c c c c}
        \hline
        \\
        $s$ & $q$ & $a$ & $b$ & $z>0$ & $L(2)$ \\ \\
        \hline \hline \\
        $\frac12$ & $e^{-\pi \sqrt{3}}$ & $\frac{1}{4}$ & $\frac{6}{4}$ & $\frac{1}{4}$ & $16L_{-4}(2)+2\pi^2 i$ \\ \\
        $\frac12$ & $e^{-\pi \sqrt{7}}$ & $\frac{5}{16}$ & $\frac{42}{16}$ & $\frac{1}{64}$ & $64L_{-4}(2)+4\pi^2 i$ \\ \\
        \hline \\
        $\frac13$ & $e^{-\pi \sqrt{8/3}}$  & $\frac{1}{3\sqrt{3}}$ & $\frac{6}{3\sqrt{3}}$ & $\frac{1}{2}$ & $\frac{15}{2}\sqrt3 L_{-3}(2)+2\pi^2 i$ \\ \\
        $\frac13$ & $e^{-\pi \sqrt{16/3}}$ & $\frac{8}{27}$ & $\frac{60}{27}$ & $\frac{2}{27}$ & $40L_{-4}(2)+\frac{10}{3}\pi^2 i$ \\ \\
        $\frac13$ & $e^{-\pi \sqrt{20/3}}$  & $\frac{8}{15\sqrt3}$ & $\frac{66}{15\sqrt3}$ & $\frac{4}{125}$ & $39\sqrt3 L_{-3}(2)+4\pi^2 i$ \\ \\
        \hline \\
        $\frac14$ & $e^{-2\pi}$  & $\frac{2}{9}$ & $\frac{14}{9}$ & $\frac{32}{81}$ & $20L_{-4}(2)+3\pi^2 i$ \\ \\
        $\frac14$ & $e^{-\pi \sqrt{6}}$  & $\frac{1}{2\sqrt3}$ & $\frac{8}{2\sqrt3}$ & $\frac{1}{9}$ & $30 \sqrt3 L_{-3}(2)+4\pi^2 i$ \\ \\
        $\frac14$ & $e^{-\pi \sqrt{10}}$ & $\frac{4}{9\sqrt2}$ & $\frac{40}{9\sqrt2}$ & $\frac{1}{81}$ & $64 \sqrt2 L_{-8}(2)+6\pi^2 i$ \\ \\
        $\frac14$ & $e^{-\pi \sqrt{18}}$ & $\frac{27}{49\sqrt3}$ & $\frac{360}{49\sqrt3}$ & $\frac{1}{7^4}$ & $180 \sqrt3 L_{-3}(2)+10\pi^2 i$ \\ \\
        $\frac14$ & $e^{-\pi \sqrt{22}}$ & $\frac{19}{18\sqrt{11}}$ & $\frac{280}{18\sqrt{11}}$ & $\frac{1}{99^2}$ & $110\sqrt{11}L_{-11}(2)+12\pi^2 i$ \\ \\
        $\frac14$ & $e^{-\pi \sqrt{58}}$ & $\frac{4412}{9801\sqrt2}$ & $\frac{105560}{9801\sqrt2}$ & $\frac{1}{99^4}$ & $960\sqrt2 L_{-8}(2)+30\pi^2 i$ \\ \\
        \hline
    \end{tabular}
    \vskip 0.5cm
\caption{Values of $(a,b,z)$ with $z>0$ in Theorem \ref{Theorem: Divergent rational cases}}
\label{Table:divergent abzq cases z>0}
\end{table}

\subsection{Irrational formulas}

We emphasize that the \textit{vast majority} of companion series formulas involve irrational values of $(a,b,z)$.  Consider the narrow class of formulas
which arises from setting $q=e^{-2\pi\sqrt{v}}$ in \eqref{5F4 1/4 case in terms of F}.  The companion series with $s=\frac14$ reduces to a linear combination of $S(1,0,v;2)$, $S(1,0,4v;2)$, and elementary constants.  There are $24$ values of $v\in\mathbb{N}$, for which both sums reduces to Dirichlet $L$-values \cite{GZ}.  The $v=1$ case produces a rational, albeit divergent, companion series (Theorem \ref{Theorem: Divergent rational cases} with $s=\frac14$ and $(a,b,z)=\left(\frac29,\frac{14}{9},\frac{32}{81}\right)$).  The other $23$ choices lead to formulas with complicated algebraic values of $(a, b, z)$.  While it is possible to determine those numbers from modular equations, it is usually much easier to use a computer.  Formulas \eqref{dq-dz} and \eqref{ecu-a-b} are rather unwieldy for computational purposes, so we found it convenient to use theta functions.  Suppose that $s=\frac{1}{2}$, and that $q$ lies in a neighborhood of zero. Then
\begin{equation}\label{param s=1/2}
\begin{split}
z=&4\frac{\theta_3^4(-q)}{\theta_3^4(q)}\left(1-\frac{\theta_3^4(-q)}{\theta_3^4(q)}\right),\\
a=&\frac{1}{\pi\theta_3^4(q)}\left(1+\frac{8\log|q|}{\theta_3(q)}\sum_{n=1}^{\infty}n^2 q^{n^2}\right),\\
b=&\frac{\log|q|}{\pi}\left(1-2\frac{\theta_3^4(-q)}{\theta_3^4(q)}\right),
\end{split}
\end{equation}
where
\begin{equation*}
\theta_3(q)=1+2\sum_{n=1}^{\infty}q^{n^2}.
\end{equation*}
More complicated formulas are required if $s\in\{\frac13,\frac14\}$.

To give an example of an irrational formula, set $q=e^{9\pi i/8}e^{-\pi\sqrt{15}/8}$ in \eqref{5F4 1/2 case in terms of F}.  We calculate $(a,b,z)\approx(11.09i, 26.54i, 3006.63)$.  The PSLQ algorithm returns the following polynomials:
\begin{align*}
0=&1-11i a+a^2,\\
0=&495-1680 i b+64 b^2,\\
0=&4096-3008 z+z^2.
\end{align*}
Therefore $(a,b,z)=\left(\frac{1}{2} i \left(11+5 \sqrt{5}\right),\frac{3}{8} i \left(35+16 \sqrt{5}\right),\frac{1}{4}\left(1+\sqrt{5}\right)^8\right)$.  After simplifying with \eqref{final F(q) integer case}, we arrive at the following identity:
\begin{equation}\label{1/2 irrational pi^2 formula}
\frac{\pi^2}{30}= \sum_{n=1}^{\infty} \frac{3 (35 + 16 \sqrt{5})n-4 (11 + 5 \sqrt{5})}{n^3{2n\choose n}^3}\left(\frac{\sqrt{5}-1}{2}\right)^{8n}.
\end{equation}
This should be compared to Ramanujan's irrational formula for $1/\pi$, since both formulas involve powers of the golden ratio \cite{Ra}.
Table \ref{Table: Irrational evaluations} contains many additional irrational formulas.
\begin{table}
    \begin{tabular}{c c c c c c p{6 in}|}%{|p{1 in}|p{2.5 in}|p{2.25 in}|p{6 in}|}
        \hline
        \\
        $s$ & $q$ & $a$ & $b$ & $|z|>1$ & \text{Value of equation \eqref{Comp series}} \\
        \\
        \hline\hline
        \\
        $\frac{1}{2}$ & $-e^{-\pi\frac{\sqrt2}{2}}$ & $\frac{3+2\sqrt{2}}{2}$ & $\frac{8+5\sqrt{2}}{2}$ &
        $\frac{-8}{(\sqrt{2}-1)^3}$ & $2L_{-4}(2)-\sqrt{2} L_{-8}(2)$ \\\\
        $\frac{1}{2}$ & $-e^{-\frac{\pi}{2}}$ & $\frac{14+10\sqrt{2}}{2}$ & $\frac{33+24\sqrt{2}}{2}$ &
        $\frac{-16\sqrt2}{(\sqrt{2}-1)^6}$ & $-\frac{13}{4}L_{-4}(2)+2\sqrt{2} L_{-8}(2)$ \\\\
        $\frac{1}{2}$ & $-e^{-\pi\frac{\sqrt{2}}{3}}$ & $\frac{59+24\sqrt{6}}{6}$ & $\frac{140+56\sqrt{6}}{6}$ &
        $\frac{-1}{(5-2\sqrt{6})^4}$ & $\frac{136}{9}L_{-4}(2)-\frac{16}{3}\sqrt{6} L_{-24}(2)$ \\\\
        $\frac{1}{2}$ & $-e^{-\pi\frac{2\sqrt{3}}{3}}$ & $\frac{3\sqrt{6}+7\sqrt{2}}{24}$ & $\frac{6\sqrt{6}+30\sqrt{2}}{24}$ &
        $\frac{-1}{2(\sqrt{3}-1)^6}$ & $16\sqrt{2}L_{-8}(2)-8\sqrt{6} L_{-24}(2)$ \\\\
        $\frac{1}{2}$ & $-e^{-\pi\frac{\sqrt{6}}{3}}$ & $\frac{5+4\sqrt{2}}{6}$ & $\frac{12+12\sqrt{2}}{6}$ &
        $\frac{-1}{(\sqrt{2}-1)^4}$ & $-8L_{-4}(2)+\frac{16}{3}\sqrt{2} L_{-8}(2)$ \\\\
        $\frac{1}{2}$ & $-e^{-\pi \frac{\sqrt{10}}{5}}$ & $\frac{23+10\sqrt{5}}{10}$ & $\frac{60+24\sqrt{5}}{10}$ &
        $\frac{-1}{(\sqrt{5}-2)^4}$ & $\frac{56}{5}L_{-4}(2)-4\sqrt{5} L_{-20}(2)$ \\\\
        $\frac{1}{2}$ & $e^{\frac{9\pi i}{8}}e^{-\pi \frac{\sqrt{15}}{8}}$ & $\frac{4(11+5\sqrt{5})}{8}i$ & $\frac{3(35+16\sqrt5)}{8}i$ & $\frac{2^{14}}{(\sqrt{5}-1)^8}$ & $-\frac{1}{240}\pi^2 i$ \\\\
        $\frac{1}{3}$ & $-e^{-\pi \frac{\sqrt{21}}{3}}$ & $\frac{10+7\sqrt{7}}{54}$ & $\frac{21+39\sqrt{7}}{54}$ &
        $\frac{-1}{26\sqrt{7}-68}$ & $-20L_{-4}(2)+\frac{35}{4}\sqrt{7} L_{-7}(2)$ \\\\
        $\frac{1}{4}$ & $-e^{-\pi \frac{\sqrt{21}}{3}}$ & $\frac{27+20\sqrt3}{72}$ & $\frac{84+112\sqrt3}{72}$ & $\frac{-1}{(42-24\sqrt3)^2}$ & $-\frac{160}{3}L_{-4}(2)+40\sqrt{3}L_{-3}(2)$ \\\\
        $\frac{1}{4}$& $-e^{-\frac{3\pi\sqrt{5}}{5}}$ &  $\frac{3987+2124\sqrt3}{4840}$ & $\frac{19380+7440\sqrt3}{4840}$& $\frac{-1}{(680\sqrt3-1178)^2}$ & $\frac{544}{5}L_{-4}(2)-72\sqrt{3} L_{-3}(2)$ \\\\
        \hline\\
    \end{tabular}
\caption{Select convergent irrational companion series evaluations.}
\label{Table: Irrational evaluations}
\end{table}

\section{Irreducible values of $S(A,B,C;2)$}\label{sec: irreducible values of S}

Irreducible values of $S(A,B,C;2)$ occur when the quadratic form $A n^2+B n m+C m^2$ fails the one class per genus test.  Apart from a few oddball cases, it is probably impossible to reduce these sums to Dirichlet $L$-functions \cite{ZR}.  In this section, we prove that it is still possible to express some irreducible values of $S(A,B,C;2)$ in terms of hypergeometric functions.  Propositions \ref{Proposition:Companion series reduction} and \ref{proposition: F in terms of S} reduce every interesting companion series to two values of $S(A,B,C;2)$.  Sometimes it is possible to select $q$, so that one sum reduces to Dirichlet $L$-values, and one sum does not.  Sometimes both values of $S(A,B,C;2)$ are irreducible, but one of them can be eliminated by finding a multi-term linear dependence with Dirichlet $L$-functions.

To make a first attempt at finding a formula, set $q=e^{-3\pi}$ in \eqref{5F4 1/2 case in terms of F}.  Then
$s=\frac12$ and $(a,b,z)=\left(\frac{1}{4}(18 r - 5 r^3),12  r - 3r^3,(7+4\sqrt{3})^{-2}\right)$, where $r=\sqrt[4]{12}$.  By \eqref{final F(q) evaluation}, the companion series equals a linear combination of $S(1,0,36;2)$, $S(4,0,9;2)$ and elementary constants.  We eliminate $S(4,0,9;2)$ with a result from \cite{ZM}:
\begin{equation}\label{S(4,0,9) eliminator}
\begin{split}
S(1,0,36;t)+S(4,0,9;t)=&\left(1-2^{-t}+2^{1-2t}\right)\left(1+3^{1-2t}\right)L_1(t)L_{-4}(t)\\
&+\left(1+2^{-t}+2^{1-2t}\right)L_{-3}(t)L_{12}(t).
\end{split}
\end{equation}
After noting that $L_1(2)=\frac{\pi^2}{6}$ and $L_{12}(2)=\frac{\pi^2}{6\sqrt{3}}$, we obtain a divergent formula:
\begin{equation*}
\begin{split}
\frac{2}{\pi^2}S(1,0,36;2)=&\frac{49}{18^2}L_{-4}(2)
+ \frac{11}{48 \sqrt3}L_{-3}(2)\\
&-\left(\frac{161+93\sqrt{3}}{18\sqrt[4]{12}}\right)\Re\left[
%{{_5{\ff}_4}}\left(\begin{matrix}
%1, & 1, & 1, & 1, & \frac{21+\sqrt{3}}{12} \nonumber \\[1.1ex]
%&  \frac32, & \frac32, & \frac32, & \frac{9+\sqrt{3}}{12} \end{matrix} \biggm| (7+4\sqrt{3})^2 \right)
\hpg54{1,\,1,\,1,\,1,\,\frac{21+\sqrt{3}}{12}}{\frac32,\frac32,\frac32,\frac{9+\sqrt{3}}{12}}{\,(7+4\sqrt{3})^2}
\right].
\end{split}
\end{equation*}
Many additional divergent formulas exist.  We consider these formulas disappointing, because they appear to be quite useless.  Rapidly converging formulas are more exciting, but trickier to produce.

Consider the restriction on $q$ imposed in Proposition \ref{Proposition:Companion series reduction}.  To obtain an $s=\frac12$ companion series from \eqref{5F4 1/2 case in terms of F}, we must select $q$ to lie in a neighborhood of zero.  Unwinding the proof of Theorem \ref{thm extended coeff}, shows that we can only select values of $q$ for which
\begin{equation*}
\theta_3^4(q)=%{_3{\ff}_2}\left(\begin{matrix}
%\frac12, & \frac12, & \frac12 \\[1.1ex]
%&  1, & 1 \end{matrix} \biggm|4\frac{\theta_3^4(-q)}{\theta_3^4(q)}\left(1-\frac{\theta_3^4(-q)}{\theta_3^4(q)}\right)\right)
\hpg32{\frac12,\,\frac12,\,\frac12}{1,1}{\,4\frac{\theta_3^4(-q)}{\theta_3^4(q)}\left(1-\frac{\theta_3^4(-q)}{\theta_3^4(q)}\right)}
\end{equation*}
holds (similar restriction exist when $s=\frac13$ and $s=\frac14$).  This constraint implies that the allowable values on the real axis are $q\in(-1,e^{-\pi})$. If $q\in(-e^{-\pi\sqrt{2}},e^{-\pi})$ then $|z|<1$, and the companion series diverges.  On the other hand, if $q\in(-1,-e^{-\pi\sqrt{2}})$ then $|z|>1$, and we obtain convergent formulas.  Suppose that $q=e^{2\pi i(\frac12+i y)}$, so that $q$ lives on the negative real axis.  Then by \eqref{final F(q) evaluation} we find
\begin{align}\label{Fq and Fq4 in the final section}
\begin{split}
F(q)=&F\left(-e^{-2\pi y}\right)=\frac{120y^3}{\pi^2}S\left(1,1,\frac{1}{4}+y^2;2\right),\\
F(q^4)=&F\left(e^{-8\pi y}\right)=\frac{120(4y)^3}{\pi^2}S\left(1,0,16y^2;2\right).
\end{split}
\end{align}
Trivial manipulations suffice to prove
\begin{equation}\label{S linear combo}
S\left(1,1,\frac{1}{4}+y^2;t\right)=-S(1,0,y^2;t)+18S(1,0,4y^2;t)-16S(1,0,16y^2;t).
\end{equation}

Now we prove the formula for $S(1,0,36;2)$ quoted in the introduction (equation \eqref{S(1,0,36) intro formula}).  Set $q=-e^{-\pi/3}$ in \eqref{5F4 1/2 case in terms of F}.  Using the results above (with $y=\frac16$), we conclude
\begin{align*}
F\left(-e^{-\pi/3}\right)=&\frac{90}{\pi^2}\left(9 S(1, 0, 9;2) - 8 S(1, 0, 36;2) -
 8 S(4, 0, 9;2)\right)\\
F\left(e^{-4\pi/3}\right)=&\frac{2880}{\pi^2}S(4,0,9;2).
\end{align*}
We can eliminate $S(4,0,9;2)$ with \eqref{S(4,0,9) eliminator}, and $S(1,0,9;2)$ disappears using
\begin{equation*}
S(1,0,9;t)=(1+3^{1-2t})L_{1}(t)L_{-4}(t)+L_{-3}(t)L_{12}(t).
\end{equation*}
Putting everything together in \eqref{5F4 1/2 case in terms of F}, and simplifying $(a,b,z)$ with \eqref{param s=1/2}, produces the desired formula for $S(1,0,36;2)$.

Next consider \eqref{5F4 1/2 case in terms of F} when $q=-e^{-\pi/\sqrt{5}}$.  Applying \eqref{Fq and Fq4 in the final section} and \eqref{S linear combo} with $y=\frac{1}{\sqrt{20}}$, reduces the formula to a linear combination of $S(1,0,20;2)$, $S(4,0,5;2)$ and $S(1,0,5;2)$.  We can eliminate the latter two sums with
\begin{align*}
S(4,0,5;t)+S(1,0,20;t)=&(1-2^{-t} + 2^{1-2t} ) L_{1}(t)L_{-20}(t) + (1 + 2^{-t} + 2^{1-2t} ) L_{-4}(t) L_{5}(t)\\
S(1,0,5;t)=&L_{1}(t)L_{-20}(t)+L_{-4}(t)L_{5}(t).
\end{align*}
John Zucker provided the first identity, and the second appears in \cite{GZ}.  Thus we arrive at
\begin{equation}\label{S(1,0,20) formula}
\frac{16 \sqrt{5}}{\pi ^2}S(1,0,20;2)=\frac{5\sqrt{5}}{3} L_{-20}(2)+\frac{104}{25}L_{-4}(2)-\sum_{n=1}^{\infty}\frac{(1)_n^3}{\left(\frac12\right)_n^3}\frac{(a-b n)}{n^3} z^{-n}
\end{equation}
where
\begin{align*}
z=&-8 \left(617+276 \sqrt{5}+2 \sqrt{5 \left(38078+17029 \sqrt{5}\right)}\right)\\
a=&\frac{34}{5}+3 \sqrt{5}+\frac{1}{2} \sqrt{\frac{9032}{25}+\frac{808}{\sqrt{5}}}\\
b=&16+7 \sqrt{5}+\frac{1}{2} \sqrt{\frac{9728}{5}+\frac{4352}{\sqrt{5}}}.
\end{align*}
This formula also converges rapidly, because $z\approx -1.9\times 10^{4}$.

We conclude the paper with one final example.  To obtain a formula for $S(1,0,52;2)$, set $q=-e^{-\pi/\sqrt{13}}$ in \eqref{5F4 1/2 case in terms of F}.  Applying \eqref{Fq and Fq4 in the final section} and \eqref{S linear combo} with $y=\frac{1}{\sqrt{52}}$, reduces the companion series to an expression involving $S(1,0,52;2)$, $S(4,0,13;2)$, and $S(1,0,13;2)$.  The latter two sums can be eliminated with
\begin{align*}
S(1,0,52;t)+S(4,0,13;t)=&(1-2^{-t} + 2^{1-2t} ) L_{1}(t)L_{-52}(t) + (1 + 2^{-t} + 2^{1-2t} ) L_{-4}(t) L_{13}(t)\\
S(1,0,13;t)=&L_{1}(t)L_{-52}(t)+L_{-4}(t)L_{13}(t).
\end{align*}
Zucker provided the first formula, and the second appears in \cite{GZ}.  Therefore, we obtain
\begin{equation}
\frac{16 \sqrt{13}}{\pi^2} S(1,0,52;2)=\frac{5\sqrt{13}}{3}L_{-52}(2)+8 L_{-4}(2)-\sum_{n=1}^{\infty}\frac{(1)_n^3}{\left(\frac12\right)_n^3}\frac{(a-b n)}{n^3} z^{-n},
\end{equation}
where
\begin{align*}
z=&-8 \left(3367657+934020 \sqrt{13}+90 \sqrt{2800274982+776656541 \sqrt{13}}\right),\\
a=&\frac{4266}{13}+91 \sqrt{13}+\frac{1}{13} \sqrt{2 \left(18194697+5046301 \sqrt{13}\right)},\\
b=&720+\frac{2595}{\sqrt{13}}+\frac{48}{26} \sqrt{13 \left(23382+6485 \sqrt{13}\right)}.
\end{align*}
Notice that $z\approx-1.07\times10^8$, so the formula converges rapidly.

\section{Conclusion}
In conclusion, it might be interesting to try to classify all of the values of $S(A,B,C;2)$ which can be treated using the ideas in Section \ref{sec: irreducible values of S}.  It would also be extremely interesting if the methods from Section \ref{sec:completing the hypergeometric} could be used to say something about $3$-dimensional lattice sums such as the Madelung constant.

\begin{acknowledgements}
The authors thank Ross McPhedran and John Zucker for the kind comments and useful suggestions.  The authors are also grateful to
Zucker for providing the
evaluations of $S(1,0,52)+S(4,0,13)$ and $S(1,0,20)+S(4,0,5)$.
\end{acknowledgements}

\end{document}